\numberwithin{equation}{section}
\def\proof{\smallskip\noindent {\it Proof: \ }} \def\endproof{\hfill$\square$\medskip}
\newtheorem{theorem}{Theorem}[section]
\newtheorem{proposition}[theorem]{Proposition}
\newtheorem{corollary}[theorem]{Corollary}
\newtheorem{conjecture}[theorem]{Conjecture}
\newtheorem{lemma}[theorem]{Lemma}
\theoremstyle{definition}
\newtheorem{defn}[theorem]{Definition}
\newtheorem{example}[theorem]{Example}
\newtheorem{remark}[theorem]{Remark}
\def\R{\mathbb{R}}
\def\Z{\mathbb{Z}}
\def\M{\mathcal{M}}
\def\D{\mathcal{D}}
\def\Sw{\mathcal{S}}
\def\C{\mathcal{C}}
\def\B{\mathcal{B}}
\def\S{\mathbb{S}}
\DeclareMathOperator{\SwEl}{\mathcal{S}el}
\DeclareMathOperator{\lex}{lex}
\DeclareMathOperator{\Fill}{fill}
\DeclareMathOperator\lk{\mathrm{lk}}
\DeclareMathOperator\Star{\mathrm{st}}
\def\Res{\mathcal{R}}
\begin{document}
\title{\vspace{-1ex}Centrally symmetric manifolds with few vertices}
\author{
Steven Klee \thanks{Research partially supported by NSF VIGRE grant DMS-0636297}\\
\small Department of Mathematics\\[-0.8ex]
\small One Shields Ave.\\[-0.8ex]
\small University of California, Davis, CA 95616, USA\\[-0.8ex]
\small \texttt{klee@math.ucdavis.edu}
\and Isabella Novik
\thanks{Research partially supported by NSF grant DMS-0801152}\\
\small Department of Mathematics, Box 354350\\[-0.8ex]
\small University of Washington, Seattle, WA 98195-4350, USA\\[-0.8ex]
\small \texttt{novik@math.washington.edu}
}
\maketitle
\begin{abstract}
A centrally symmetric $2d$-vertex combinatorial triangulation of the product of spheres $\S^i\times\S^{d-2-i}$ is constructed for all pairs of non-negative integers $i$ and $d$ with $0\leq i \leq d-2$. For the case of $i=d-2-i$, the existence of such a triangulation was conjectured by Sparla. The constructed complex admits a vertex-transitive action by a group of order $4d$. The crux of this construction is a definition of a certain full-dimensional subcomplex, $\B(i,d)$, of the boundary complex of the $d$-dimensional cross-polytope. This complex $\B(i,d)$ is a combinatorial manifold with boundary and its boundary provides a required triangulation of $\S^i\times\S^{d-i-2}$. Enumerative characteristics of $\B(i,d)$ and its boundary, and connections to another conjecture of Sparla are also discussed.
\end{abstract}
\section{Introduction}
What is the minimum number of vertices needed to triangulate a given (triangulable) manifold? How will the answer change if we require a triangulation to be centrally symmetric (i.e., possess a free involution)? Starting from the seminal work of Ringel and Youngs \cite{RingYoung, Ringel}, and Walkup \cite {Walkup}, this question has motivated a tremendous amount of research in topological combinatorics and combinatorial topology, see for instance K\"uhnel's book \cite{Kuhnel-book}, a forthcoming  book by Lutz \cite{Lutz-book} parts of which are available electronically at \cite{Lutz-web}, and many references mentioned there.

Of a particular interest are centrally symmetric (cs, for short) triangulations of products of spheres. It is well-known and easy to see that an arbitrary cs triangulation $\Delta$ of $\S^i\times \S^{d-i-2}$ has at least $2d$ vertices. (Indeed, such a triangulation necessarily contains two vertex-disjoint $(d-2)$-simplices, and hence has at least $2(d-1)$ vertices. Moreover, if $\Delta$ had only $2(d-1)$ vertices, it would be a full-dimensional subcomplex of the boundary complex of the $(d-1)$-dimensional cross polytope, which is a combinatorial $(d-2)$-dimensional sphere. This is however impossible as no closed manifold but a sphere is embeddable in a sphere of the same dimension.) The natural question is then whether there exist cs triangulations of $\S^i\times \S^{d-i-2}$ with exactly $2d$ vertices.  Our main theorem is a {\em positive} answer to this question.

The first result in this series is due to K\"uhnel and Lassmann \cite{KuhLass} who constructed a cs $2d$-vertex triangulation of $\S^1\times\S^{d-2}$ for all $d\geq 2$. This appears to be the only infinite family of cs triangulations of products of spheres (with $2d$ vertices) that was known until now.

In his Doctoral thesis \cite{Sparla-thesis}, Sparla constructed a cs 12-vertex triangulation of $\S^2 \times \S^2$, see also \cite{LassSp}, and conjectured that there exists a cs $4k$-vertex triangulation of $\S^{k-1} \times \S^{k-1}$ for every $k$. Lutz \cite{Lutz-thesis}, with an aid of computer programs MANIFOLD\_VT and BISTELLAR, confirmed this conjecture for $k=4$ and $k=5$ as well as found many cs $2d$-vertex triangulations of $\S^i\times \S^{d-i-2}$ for $d\leq 10$. Very recently, Effenberger \cite{Eff-thesis} proposed a certain construction of cs simplicial complexes with $4k$ vertices
that conjecturally triangulate $\S^{k-1}\times \S^{k-1}$; with the help of the software package {\em simcomp} he then verified that this indeed holds for all values of $k\leq 12$, thus establishing Sparla's conjecture up to $k=12$.

Our main result provides a cs $2d$-vertex triangulation of $\S^i\times \S^{d-i-2}$ for {\em all} nonnegative integers $0\leq i\leq d-2$, and in particular settles Sparla's conjecture in full generality. In the following, we denote by $\D_m$ the dihedral group of order $2m$.

\begin{theorem} \label{main-S}
For all pairs of integers $(i,d)$ with $0\leq i\leq d-2$, there exists a centrally symmetric $2d$-vertex triangulation of $\S^i\times \S^{d-i-2}$. This triangulation admits a vertex-transitive action by the dihedral group of order $4d$, $\D_{2d}$, if at least one of the numbers $i$ and $d-i$ is odd, and by the group $\Z_2\times \D_d$ otherwise. \end{theorem}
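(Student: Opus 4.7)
The plan, following the roadmap in the abstract, is to construct an explicit full-dimensional subcomplex $\B(i,d)$ of the boundary complex of the $d$-dimensional cross-polytope (whose vertex set I label $\{\pm 1,\dots,\pm d\}$, with faces being the antipode-free subsets), prove that $\B(i,d)$ is a combinatorial $(d-1)$-manifold with boundary, and then identify $\partial\B(i,d)$ as a centrally symmetric triangulation of $\S^i\times\S^{d-i-2}$. Since the antipodal involution $k\mapsto -k$ preserves the ambient complex, it is enough to build $\B(i,d)$ so as to be invariant under it; the boundary then automatically carries a free involution on all $2d$ vertices. A facet of the ambient complex is specified by a sign vector $(\epsilon_1,\dots,\epsilon_d)\in\{\pm 1\}^d$, and I would look for a combinatorial rule, depending symmetrically on $i$ and $d-i-2$, selecting which sign vectors index facets of $\B(i,d)$; to make the dihedral symmetry transparent, the rule should depend only on the cyclic pattern of the $\epsilon_j$ around $\Z/d\Z$.

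Once $\B(i,d)$ is defined, the next step is verifying the manifold-with-boundary property by showing that every vertex link is either a combinatorial $(d-2)$-sphere (interior vertex) or a combinatorial $(d-2)$-ball (boundary vertex), and more generally that higher-codimensional links have the right homeomorphism type. The natural approach is induction on $d$: vertex links in the cross-polytope boundary are themselves boundaries of lower-dimensional cross-polytopes, so one expects each $\lk_{\B(i,d)}(v)$ to take the form $\B(i',d-1)$ for an appropriate $i'$ that can be read off from the sign of $v$. This reduces the manifold property to the inductive hypothesis, with straightforward base cases at $d \leq 3$.

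The heart of the proof, and the main obstacle, is to recognize $\partial\B(i,d)$ topologically as the product of spheres. The cleanest strategy is to prove that $\B(i,d)$ itself is PL-homeomorphic to $\S^i\times B^{d-i-1}$ (or symmetrically to $B^{i+1}\times\S^{d-i-2}$), so that its boundary is manifestly $\S^i\times\S^{d-i-2}$. Concretely, I would look for a subcomplex $K\subseteq\B(i,d)$ triangulating $\S^i$ onto which $\B(i,d)$ deformation-retracts, with each simplex of $K$ carrying a $(d-i-1)$-ball fibre assembled from cross-polytopal faces in the remaining coordinates; the fibre structure should be readable directly from the defining rule of $\B(i,d)$. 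A fallback strategy, if a clean retraction proves elusive, is to compute the $f$-vector and $\Z_2$-equivariant PL type of $\partial\B(i,d)$ and then connect it by bistellar flips to the small triangulations of K\"uhnel--Lassmann, Sparla, and Effenberger in the cases where those are available, exploiting the uniqueness-in-dimension of simply-connected closed manifolds with the correct cohomology ring in low dimensions. Finally, the symmetry assertion of Theorem~\ref{main-S} reduces to checking which subgroup of the signed-permutation symmetry group of the cross-polytope preserves the combinatorial rule defining $\B(i,d)$: the parity dichotomy should appear because, when $i$ and $d-i$ are both even, the rotation $k\mapsto k+1\pmod d$ preserves $\B(i,d)$ only after composing with a sign flip, which forces one to settle for $\Z_2\times\D_d$ rather than the full $\D_{2d}$.
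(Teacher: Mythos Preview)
Your scaffold matches the paper's: define a full-dimensional subcomplex $\B(i,d)$ of $C^*_d$ by a rule on sign words, prove it is a combinatorial manifold with boundary, then identify $\partial\B(i,d)$. Two smaller points first. The paper's rule is \emph{not} cyclic: it counts the number of linear switches $j\in[d-1]$ with $u_j\neq u_{j+1}$, and the non-cyclicity is precisely what generates the parity dichotomy (in fact your mechanism is reversed: the pure rotation $R$ preserves $\B(i,d)$ when $i$ is even, giving $\Z_2\times\D_d$, while the rotation-with-sign-flip $R'$ works when $i$ is odd, giving $\D_{2d}$; the boundary then inherits $\D_{2d}$ whenever either $i$ or $d-i-2$ is odd by passing to the complementary complex). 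A genuinely cyclic rule would force too much symmetry and run into Lutz's obstruction for $\S^2\times\S^4$. Also, vertex links in $\B(i,d)$ are \emph{not} of the form $\B(i',d-1)$; the paper instead proves that $\Star(x_d)$ and $\Star(y_d)$ are shellable (hence combinatorial balls), that their intersection is $\B(i-1,d-1)$, and then uses Mayer--Vietoris for the homology and the ball-union description for the manifold property.

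The substantive gap is in your identification of the boundary. The paper never shows that $\B(i,d)$ is PL-homeomorphic to $\S^i\times B^{d-i-1}$; it only remarks that $\B(i,d)$ collapses onto $C^*_{i+1}$ and is therefore \emph{some} disc bundle over $\S^i$, without trivializing it. Proving triviality directly (your primary strategy) is not attempted and is not obviously feasible. Instead the paper identifies $\partial\B(i,d)$ by: (i) showing it contains the $\min\{i,d-i-2\}$-skeleton of $C^*_d$, hence is simply connected for $2\le i\le d-4$; (ii) computing $H_*(\partial\B(i,d);\Z)$ from Poincar\'e--Lefschetz duality and the long exact sequence of the pair $(\B(i,d),\partial\B(i,d))$; and (iii) invoking a theorem of Kreck that a simply connected codimension-one submanifold of $\S^{d-1}$ with the homology of $\S^i\times\S^{d-i-2}$ is homeomorphic to it. The cases $i=0$ and $i=1$ are handled by hand. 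Your fallback hints at a recognition principle ``in low dimensions,'' but Kreck's theorem is the decisive input in \emph{all} dimensions $d\ge 6$; without it (or an independent proof that the disc bundle is trivial) the identification step does not go through.
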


The last part of Theorem \ref{main-S} proves Conjecture 4.9 from \cite{Lutz-thesis} for all $d\not\equiv 2 \mod 4$. This conjecture asserts existence of cs $2d$-vertex triangulations of $\S^{\lfloor\frac{d}{2}\rfloor-1}\times \S^{\lceil\frac{d}{2}\rceil-1}$ admitting a vertex-transitive dihedral group action. Further,  Lutz \cite{Lutz-thesis} has shown that no cs triangulation of $\S^2 \times \S^4$ on $16$ vertices admits a vertex-transitive action by a cyclic group of order $16$, and no cs triangulation of $\S^2 \times \S^6$ on $20$ vertices admits a vertex-transitive action by a dihedral group of order $40$.  As such, the parity distinction in Theorem \ref{main-S} cannot be avoided.

The crux of the proof of Theorem \ref{main-S} is a construction of a certain simplicial complex, $\B(i,d)$ (for all $0\leq i\leq d-1$) that is rather easy to analyze. This complex is constructed as a pure full-dimensional subcomplex of the boundary complex of the $d$-dimensional cross polytope. (In fact, for $i=d-1$, $\B(i,d)$ is the entire boundary complex of the cross polytope.) Theorem \ref{main-S} follows once we establish the following properties of $\B(i,d)$.

\begin{theorem} \label{main-B}
For $0\leq i <d-1$, the complex $\B(i,d)$ satisfies the following:
\begin{enumerate}
\item[(a)] $\B(i,d)$ contains the entire $i$-skeleton of the $d$-dimensional cross polytope as a subcomplex.
\item[(b)] $\B(i,d)$ is centrally symmetric. Moreover, it admits a vertex-transitive action of $\Z_2\times\D_d$ if $i$ is even and of $\D_{2d}$ if $i$ is odd.
\item[(c)] The complement of $\B(i,d)$ in the boundary complex of the $d$-dimensional cross polytope (that is, the complex generated by the facets of the cross polytope that are not in $\B(i,d)$) is simplicially isomorphic to $\B(d-i-2,d)$.
\item[(d)] $\B(i,d)$ is a combinatorial manifold (with boundary) whose
integral (co)homology groups coincide with those of $\S^i$.
\item[(e)] The boundary of $\B(i,d)$ is homeomorphic to $\S^i\times \S^{d-i-2}$.
\end{enumerate}
\end{theorem}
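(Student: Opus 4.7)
\medskip\noindent\textbf{Proof strategy.} We treat the five parts in order, noting that (a), (b), and (c) will be direct combinatorial consequences of the explicit definition of $\B(i,d)$, while (d) and (e) are the topologically substantive parts. For (a), we verify from the definition that every $(i+1)$-subset of vertices of $\diamond_d$ containing no antipodal pair extends to at least one facet of $\B(i,d)$. For (b), we exhibit the claimed group actions on generators: the antipodal map commutes with the facet-selection rule, and appropriate cyclic and reflective generators of $\D_{2d}$ or $\Z_2\times\D_d$ also permute the selected facets; the parity split between the two groups should come down to which permutations of the index set $\{1,\dots,d\}$ preserve the facet-selection rule. For (c), we exhibit an explicit vertex permutation (presumably a sign flip in a chosen subset of coordinates) that carries the complement of $\B(i,d)$ in $\partial\diamond_d$ simplicially onto $\B(d-i-2,d)$.

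For (d), the manifold-with-boundary statement is established by analyzing vertex links: for each vertex $v\in\B(i,d)$, we show that $\lk_{\B(i,d)}(v)$ is either a combinatorial $(d-2)$-sphere (at an interior vertex) or a $(d-2)$-ball (at a boundary vertex). The natural tactic is to identify each such link with a smaller complex of the same family, say $\B(i-1,d-1)$ or $\B(i,d-1)$, and induct on $d$. For the (co)homology claim, we apply Mayer--Vietoris to the decomposition $\partial\diamond_d=\B(i,d)\cup\B(d-i-2,d)$ with common intersection $\partial\B(i,d)$: the ambient $\partial\diamond_d\cong\S^{d-1}$ has known homology, and part (c) identifies the homology of the second piece with that of $\B(d-i-2,d)$, so by a simultaneous induction on $d$ the homology of $\B(i,d)$ is pinned down. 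A cleaner alternative would be to construct an explicit deformation retraction of $\B(i,d)$ onto the sub-cross-polytope boundary $\partial\diamond_{i+1}\cong\S^i$, which sits inside $\B(i,d)$ by virtue of (a).

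Part (e) is the main obstacle. Knowing that $\B(i,d)$ is a PL $(d-1)$-manifold with boundary and has the homology of $\S^i$ is not enough to force $\partial\B(i,d)\cong\S^i\times\S^{d-i-2}$. The cleanest strategy is to identify $\B(i,d)$ explicitly as a PL regular neighborhood of the subcomplex $\partial\diamond_{i+1}\subset\partial\diamond_d$, exploiting the join decomposition $\partial\diamond_d=\partial\diamond_{i+1}*\partial\diamond_{d-i-1}$ of the cross-polytope boundary into an $\S^i$ and an $\S^{d-i-2}$. A regular neighborhood of one join factor in the join of two spheres is a product $\S^i\times D^{d-i-1}$, whose boundary is exactly the desired $\S^i\times\S^{d-i-2}$. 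Making this precise at the simplicial level will likely require an explicit facet-by-facet matching between $\B(i,d)$ and such a regular neighborhood; a backup plan is induction on $d$, using the theorem at smaller parameters to identify each vertex link of $\partial\B(i,d)$ with a join or suspension of products of smaller spheres, and then gluing these local product structures into a global one using the group action supplied by (b).
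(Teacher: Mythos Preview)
Your outline for (a)--(c) matches the paper's arguments essentially verbatim. The substantive divergence is in (d) and (e).

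For (d), both of your proposed tactics run into trouble. The vertex link $\lk_{\B(i,d)}(x_d)$ is \emph{not} of the form $\B(i-1,d-1)$ or $\B(i,d-1)$: it consists of those facets $\tau$ of $C^*_{d-1}$ whose word either ends in $x$ with at most $i$ switches or ends in $y$ with at most $i-1$ switches --- a hybrid not in the family. And your Mayer--Vietoris on $\S^{d-1}=\B(i,d)\cup\C(i,d)$ is circular: the intersection is precisely $\partial\B(i,d)$, whose homology you do not yet know, and both pieces live at the same parameter $d$, so induction on $d$ buys nothing. The paper instead decomposes $\B(i,d)=\Star_{\B(i,d)}(x_d)\cup\Star_{\B(i,d)}(y_d)$; the key observation is that the intersection of these two stars equals $\B(i-1,d-1)$ (a clean drop in both parameters), and the key technical lemma is that each star is \emph{shellable}, hence a combinatorial ball. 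Mayer--Vietoris on this decomposition immediately gives $\widetilde{H}_j(\B(i,d))\cong\widetilde{H}_{j-1}(\B(i-1,d-1))$, and gluing two balls along a codimension-one manifold in their boundaries yields the manifold structure. Your ``cleaner alternative'' (a collapse onto $C^*_{i+1}$) does work and is recorded in the paper as a remark, but is not the route taken.

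For (e), the paper's approach is entirely different from your regular-neighborhood idea: it computes $H_*(\partial\B(i,d))$ via Poincar\'e--Lefschetz duality and the long exact sequence of the pair $(\B(i,d),\partial\B(i,d))$, shows $\partial\B(i,d)$ is simply connected for $2\le i\le d-4$ (because it contains the full $2$-skeleton of $C^*_d$, by (a) applied to both $\B(i,d)$ and its complement), and then invokes a recognition theorem of Kreck: any simply connected codimension-one submanifold of $\S^{d-1}$ with the homology of $\S^i\times\S^{d-i-2}$ is homeomorphic to it. The cases $i=0,1$ are handled separately by hand. Your regular-neighborhood strategy is plausible in principle --- and the collapse mentioned above is a step toward it --- but the paper sidesteps all the delicate PL bookkeeping by appealing to Kreck's black box.
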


The construction of $\B(i,d)$ is so simple to state that we cannot resist a temptation to sketch it right now. More details will be given in Section 3. Let $C^*_d$ denote the boundary complex of the $d$-dimensional cross polytope on the vertex set $\{x_1,\ldots,x_d,y_1,\ldots,y_d\}$, where the labeling is such that for every $j$, $x_j$ and $y_j$ are antipodal vertices of $C^*_d$. Then each facet $\tau$ of $C^*_d$ can be identified with a word, $w(\tau)$, of length $d$ in the alphabet $\{x,y\}$: the $i$-th entry of $w(\tau)$ is $x$ if $x_i\in \tau$ and it is $y$ otherwise.  For instance, $xxyyy$ encodes the facet $\{x_1, x_2, y_3,y_4, y_5\}$ of $C^*_5$.  For each word, $u=u_1...u_d$ of length $d$ in the $\{x,y\}$-alphabet count the number of indices $1\leq j\leq d-1$ such that $u_j\neq u_{j+1}$, that is, count the number of switches from $x$ to $y$ and $y$ to $x$. For example, in $xyxxyyy$ there are 3 such switches occurring at positions $j=1,2,4$. We {\bf define} $\B(i,d)$ to be a pure subcomplex of $C^*_d$ generated by all the facets encoded by words {\bf with at most $i$ switches}. Thus $\B(0,d)$ is generated by the two facets of $C^*_d$ with zero switches, namely $\{x_1, x_2,\ldots,x_d\}$ and $\{y_1,y_2,\ldots, y_d\}$, and so it is a disjoint union of two $(d-1)$-simplices. On the other hand, for $i=d-1$ as many switches as possible are allowed, and hence $\B(d-1,d)$ is the entire $C^*_d$. The boundary of the complex $\B(1,4)$ is pictured in Figure \ref{torus}; note that $\B(1,4)$ and its complement in $C^*_4$ provide the classical decomposition of $\S^3$ as the union of two solid tori $\S^1\times\mathbb{B}^2$ glued together along their common boundary.

\begin{figure}[ht]\label{torus}
\begin{center}
\epsfig{file = 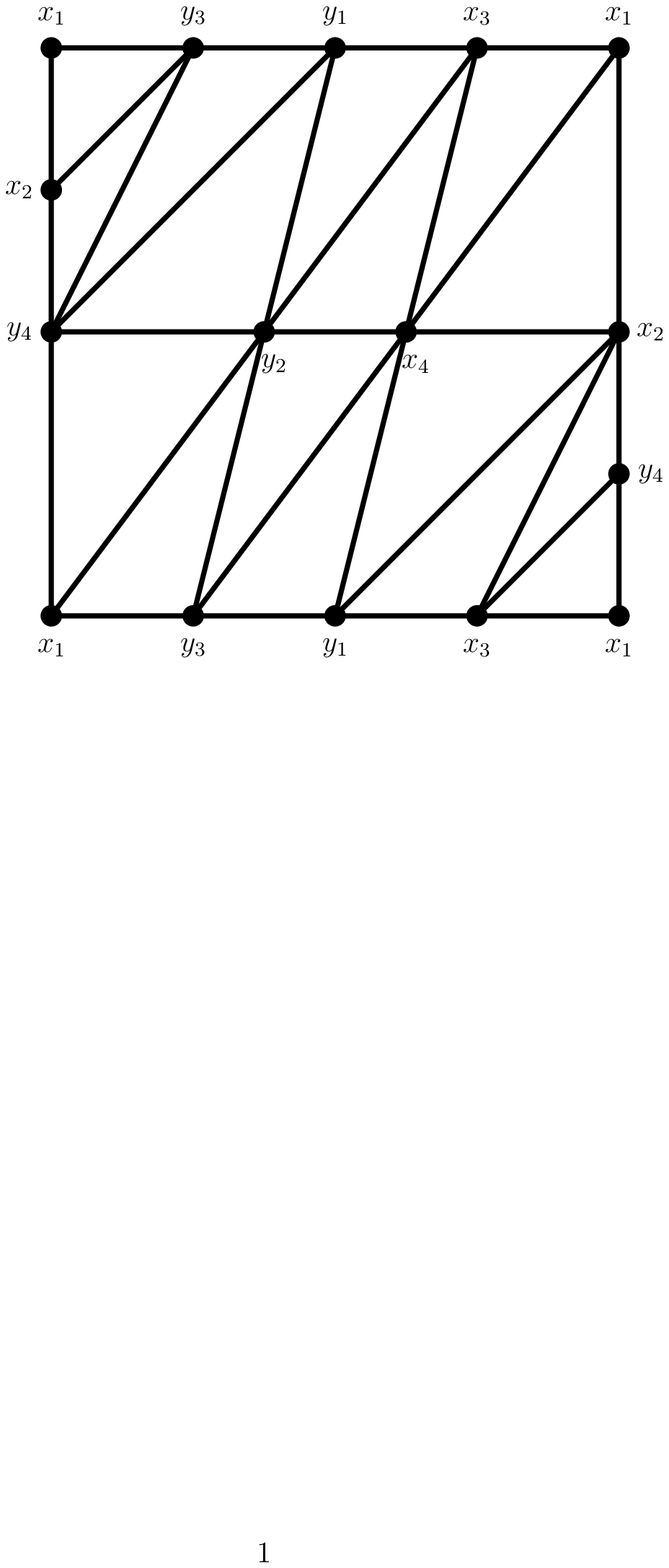, height = 5cm, bbllx = 200, bblly=450,
bburx=470, bbury=710, clip=}
\end{center}
\caption{$\partial\B(1,4)$}
\end{figure}

The rest of the paper is structured as follows. In Section 2 we review basic facts related to simplicial complexes and combinatorial manifolds. Section 3 is a purely combinatorial section devoted to the proof of parts (a)--(d) of Theorem \ref{main-B}. Section 4 is more topological and contains the proof of part (e) along with derivation of Theorem \ref{main-S} from Theorem \ref{main-B}. We close in Section 5 with several results pertaining to face enumeration and connections to another conjecture by Sparla.

\section{Preliminaries}
Here we briefly review several notions and results
related to simplicial complexes and combinatorial manifolds
as well as set up some notation.

A {\em simplicial complex} $\Delta$ on the vertex set $V$ is a collection of subsets of $V$ that is closed under inclusion
and contains all singletons $\{v\}$ for $v\in V$.
The elements of $\Delta$ are called its {\em faces}.
For $\sigma\in\Delta$, set $\dim \sigma:=|\sigma|-1$ and define the {\em dimension} of $\Delta$, $\dim \Delta$, as the maximal dimension of its faces. The {\em $i$-skeleton} of $\Delta$ is the collection of
all faces of $\Delta$ of dimension at most $i$.
The {\em facets} of $\Delta$
are maximal (under inclusion) faces of $\Delta$. We say that $\Delta$ is {\em pure} if all of its facets have
the same dimension.

Let $\Delta$ be a pure $(d-1)$-dimensional
simplicial complex. For $\sigma\in\Delta$, denote
by $2^{\sigma}$ the simplex $\sigma$ together with all of its faces.
A {\em shelling} of $\Delta$ is an ordering $(\tau_1, \tau_2, \ldots, \tau_s)$ of its facets such that for all $1<i\leq s$, the complex
$2^{\tau_i}\cap(\cup_{j<i}2^{\tau_j})$
is pure of dimension $d-2$. Equivalently, $(\tau_1, \tau_2, \ldots, \tau_s)$ is
a shelling if for every $1\leq i\leq s$, the collection of faces $2^{\tau_i}-(\cup _{j<i}2^{\tau_j})$
has a unique minimal element (with respect to inclusion); this minimal face is called the {\em restriction} of $\tau_i$ and is denoted $\Res(\tau_i)$.

If $\Delta$ is a simplicial complex and $\sigma$ is a face of $\Delta$, then the {\em link} of $\sigma$ in $\Delta$, $\lk_\Delta \sigma$, and the {\em star} of $\sigma$ in $\Delta$, $\Star_\Delta \sigma$, are defined by
\begin{displaymath}
\lk_\Delta \sigma=\lk \sigma:= \{\tau-\sigma\in\Delta \ : \ \sigma\subseteq \tau \in \Delta \}\, \mbox{ and } \, \Star_\Delta \sigma=\Star \sigma:=\{\tau\in\Delta \ : \ \sigma\cup \tau\in\Delta\}.
\end{displaymath}

A $(d-1)$-dimensional simplicial complex $\Delta$ is called a {\em combinatorial manifold} if the link of every non-empty face $\sigma$ of $\Delta$ is a triangulated $(d-|\sigma|-1)$-dimensional ball or sphere. A combinatorial ball (sphere) is a combinatorial manifold that triangulates a ball (sphere).

A well-known result due to Danaraj and Klee \cite{DanarajKlee} asserts that if a $(d-1)$-dimensional simplicial complex $\Delta$ is shellable and if, in addition, each $(d-2)$-dimensional face of $\Delta$ is contained in no more than two facets, then $\Delta$ is a combinatorial ball or combinatorial sphere. Therefore, a proper, full-dimensional, shellable subcomplex of the boundary complex of a simplicial polytope is a combinatorial ball.

All simplicial complexes considered in this paper are subcomplexes of the boundary complex of a cross polytope. Consider $d$ affinely independent points in $\R^d-\{0\}$, say, $x_1,\ldots, x_d$, and let $y_i=-x_i\in\R^d$ for $1\leq i \leq d$. A $d$-dimensional {\em cross polytope} is the convex hull of the set $\{x_1,\ldots, x_d, y_1,\ldots, y_d\}$. All $d$-dimensional cross polytopes are affinely equivalent simplicial polytopes. The boundary complex of the $d$-dimensional cross polytope, denoted $C^*_d$, is thus a pure simplicial complex on the vertex set $V_d=V:=\{x_1,\ldots, x_d, y_1,\ldots, y_d\}$ (that we fix from now on) whose facets are the subsets of $V_d$ containing exactly one element from $\{x_j,y_j\}$ for each $1\leq j \leq d$. Hence (i) $C^*_{d-1}$ is a subcomplex of $C^*_d$ induced by $V_{d-1}\subset V_d$, and (ii) the set of facets of $C^*_d$ is in natural bijection with the set of $xy$-words of length $d$: a facet $\tau\in C^*_d$ is encoded by a word $w(\tau)=u_1...u_d$, where $u_i=x$ if $x_i\in C^*_d$ and $u_i=y$ otherwise; conversely, an $xy$-word $u=u_1...u_d$ encodes a facet $F(u)=\{(u_1)_1,\ldots, (u_d)_d\}$. For example, the facet of $C^*_5$ encoded by $u=xyxxy$ is $F(u)=\{x_1,y_2,x_3,x_4,y_5\}$.

We will also need a few standard facts from homology theory, such as the Mayer-Vietoris sequence (see Hatcher's book \cite{Hatcher} for reference). Throughout the paper, we denote by $H_j(\Delta; \Z)$ ($\widetilde{H}_j(\Delta; \Z)$, resp.) the $j$-th simplicial homology (reduced simplicial homology, resp.) of $\Delta$ computed with coefficients in $\Z$.
\section{The main construction}

In this section we present our main construction --- the family of complexes $\B(i,d)$, and study various combinatorial properties that these complexes possess.

Write $[d-1]$ for the set $\{1,2,\ldots, d-1\}$. For an $xy$-word $u=u_1 \ldots u_d$ of length $d$, define the {\em switch set} of $u$, $\Sw_d(u)=\Sw(u):=\{j\in[d-1]\ : u_j\neq u_{j+1}\}$. Using the above identification between the facets of $C^*_d$ and $xy$-words of length $d$, define the {\em switch set} of a facet $\tau\in C^*_d$ by $\Sw_d(\tau)=\Sw(\tau):=\Sw(w(\tau))$. (When working with a fixed $d$, we will omit the subscripts.)

\begin{defn}
For $-1\leq i \leq d-1$, the complex $\B(i,d)$ is a pure full-dimensional subcomplex of $C^*_d$ whose facets are the facets of $C^*_d$ with switch set of size at most $i$. \end{defn}

Thus, $\B(i-1,d)\subset\B(i,d)$ for all $0\leq i\leq d-1$; $\B(-1,d)$ is the empty complex and $\B(d-1,d)=C^*_d$; $\B(0,d)=2^{\{x_1,x_2,\ldots, x_d\}} \cup 2^{\{y_1,y_2,\ldots, y_d\}}$ is a disjoint union of two simplices, and $\B(d-2,d)$ is $C^*_d$ with two facets (the ones identified with $xyxy\ldots$ and $yxyx\ldots$) removed; $\B(1,d)$ has $2d$ facets: they are the two facets of $\B(0,d)$ together with facets of the form $\{x_1, x_2,\ldots, x_{j}, y_{j+1},\ldots, y_d\}$ and $\{y_1,y_2,\ldots, y_j, x_{j+1},\ldots, x_d\}$ for $1\leq j \leq d-1$.  The complex $\B(1,3)$ is shown in Figure 2. 

\begin{figure}[ht]\label{annulus}
\begin{center}
\epsfig{file = 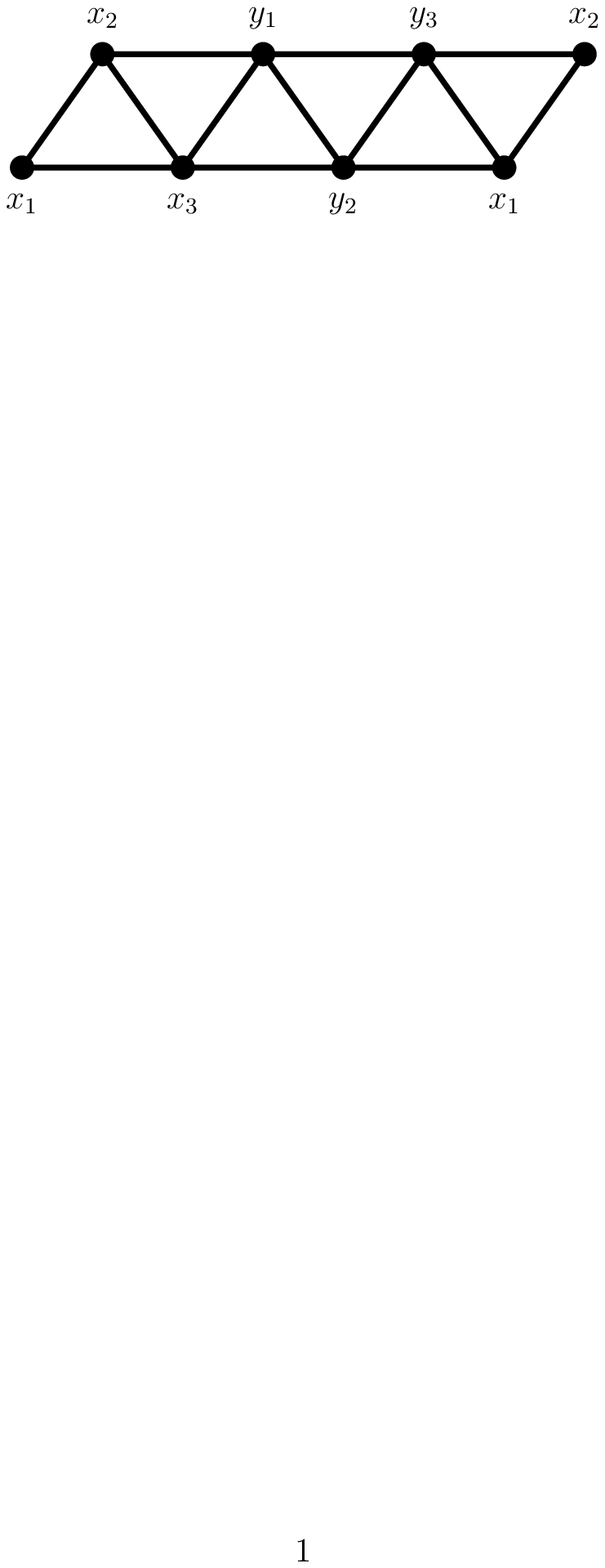, height = 2cm, bbllx = 190, bblly=560, bburx=415, bbury=640, clip=}
\end{center}
\caption{$\B(1,3)$}
\end{figure}

What are the smaller-dimensional faces of $\B(i,d)$?  If $\sigma$ is any face of $C^*_d$, then $\sigma$ is of the form $\{z_{j_1},z_{j_2},\ldots,z_{j_s}\}$ for some $1\leq j_1<\ldots<j_s\leq d$ and $z_{j_k}\in\{x_{j_k},y_{j_k}\}$ for all $1\leq k \leq s$. Set $j_0=0$. For $1\leq k \leq s$ and for $j_{k-1}<j< j_k$, define \begin{displaymath}
z_j:=\left\{\begin{array}{ll}
x_j & \mbox{ if $z_{j_k}=x_{j_k}$,}\\
y_j & \mbox{ otherwise}.
\end{array}
\right.
\end{displaymath}
Also for all $j_{s}<j\leq d$, define $z_j:=x_j$ if $z_{j_{s}}=x_{j_{s}}$ and define $z_j:=y_j$ otherwise. We call the facet $\tau:=\{z_1,\ldots, z_d\}$ of $C^*_d$, the {\em filling} of $\sigma$ in $C^*_d$, and write $\tau=\Fill_d(\sigma)$. Observe that $\sigma\subseteq \Fill_d(\sigma)$ and that if $\tau'$ is any other facet of $C^*_d$ containing $\sigma$, then the size of the switch set of $\tau'$ is at least as large as that of the switch set of $\Fill_d(\sigma)$. This establishes the following lemma.
\begin{lemma} \label{small-faces}
A face $\sigma$ of $C^*_d$ is a face of $\B(i,d)$ if and only $|\Sw_d(\Fill_d(\sigma))|\leq i$.
\end{lemma}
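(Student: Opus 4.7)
The plan is to deduce the lemma from the two properties of $\Fill_d(\sigma)$ already asserted in the paragraph preceding the statement: (i) $\sigma \subseteq \Fill_d(\sigma)$, and (ii) for any facet $\tau'$ of $C^*_d$ containing $\sigma$, $|\Sw_d(\tau')| \geq |\Sw_d(\Fill_d(\sigma))|$. Granting (i) and (ii), each direction of the equivalence is essentially a one-line argument.

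For the forward direction, if $\sigma \in \B(i,d)$, then since $\B(i,d)$ is pure of dimension $d-1$, there is a facet $\tau \in \B(i,d)$ containing $\sigma$, and by definition $|\Sw_d(\tau)| \leq i$; property (ii) then yields $|\Sw_d(\Fill_d(\sigma))| \leq |\Sw_d(\tau)| \leq i$. For the reverse direction, if $|\Sw_d(\Fill_d(\sigma))| \leq i$, then $\Fill_d(\sigma)$ is, by definition, a facet of $\B(i,d)$, and (i) forces $\sigma \in \B(i,d)$.

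The only point requiring genuine verification is (ii), which I would settle by a direct switch count. Write $\sigma = \{z_{j_1}, \ldots, z_{j_s}\}$ with $j_1 < \cdots < j_s$, and let $\tau'$ have word $u_1 \cdots u_d$. For each $k \in \{2, \ldots, s\}$ for which $z_{j_{k-1}}$ and $z_{j_k}$ are of different letters (one an $x$, the other a $y$), the subword $u_{j_{k-1}} \cdots u_{j_k}$ starts and ends with different letters and so contributes at least one index to $\Sw_d(\tau') \cap [j_{k-1}, j_k - 1]$. These index ranges are pairwise disjoint in $[d-1]$, so $|\Sw_d(\tau')|$ is at least the number of such indices $k$. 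A direct inspection of the filling rule shows that $\Fill_d(\sigma)$ attains this lower bound: in each range $(j_{k-1}, j_k]$ all entries equal $z_{j_k}$, and both the prefix before $j_1$ and the suffix after $j_s$ are constant, so the only switches of $\Fill_d(\sigma)$ occur at those indices $j_{k-1}$ for which $z_{j_{k-1}} \neq z_{j_k}$. I foresee no serious obstacle: the lemma is essentially a tautology once (ii) is verified, and (ii) is the elementary observation that a word with prescribed values at specified positions must make at least one switch in every gap between consecutively prescribed positions whose values differ.
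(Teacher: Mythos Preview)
Your proof is correct and follows the same approach as the paper: the paper's proof is precisely the sentence ``Observe that $\sigma\subseteq \Fill_d(\sigma)$ and that if $\tau'$ is any other facet of $C^*_d$ containing $\sigma$, then the size of the switch set of $\tau'$ is at least as large as that of the switch set of $\Fill_d(\sigma)$,'' from which the lemma is declared to follow. You spell out the two directions and, unlike the paper, give an explicit verification of property~(ii); all of this is fine and matches the intended argument.
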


We are now in a position to verify parts (a)--(c) of Theorem \ref{main-B}. All of them follow easily from our definition of $\B(i,d)$.

\medskip\noindent{\it Proof of Theorem \ref{main-B}(a):\,}
To show that $\B(i,d)$ contains the entire $i$-skeleton of $C^*_d$, consider an $i$-face $\sigma$ of $C^*_d$. Then $\sigma=\{z_{j_1}, z_{j_2}, \ldots, z_{j_{i+1}}\}$ for some $1\leq j_1 < \cdots < j_{i+1} \leq d$ and $z_{j_k}\in\{x_{j_k}, y_{j_k}\}$ for $k \in [i+1]$. It follows from the definition of filling that $\Sw(\Fill_d(\sigma)) \subseteq \{j_1,\ldots, j_i\}$,
and hence has size at most $i$. Thus by Lemma \ref{small-faces}, $\sigma\in\B(i,d)$, and the result follows. \endproof

\medskip\noindent{\it Proof of Theorem \ref{main-B}(b):\,}

We first treat the case that $i$ is even. To show that $\B(i,d)$ is centrally symmetric and, in fact, admits a vertex-transitive action by $\Z_2\times\D_d$, define three permutations, $D$, $E$, and $R$, on the vertex set $V_d$ of $C^*_d$ as follows: \begin{itemize}
\item $D$ maps $x_j$ to $y_j$, and $y_j$ to $x_j$; this permutation has order 2.
\item $E$ maps $x_j$ to $x_{d-j+1}$, and $y_j$ to $y_{d-j+1}$; this permutation has order 2.
\item $R$ maps $x_j$ to $x_{j+1}$ and $y_j$ to $y_{j+1}$, where the addition is modulo $d$ (so $R(x_d)=x_1$); this permutation has order $d$.
\end{itemize}
All three of these maps induce a simplicial automorphism of $C^*_d$. In particular, each of these maps defines a permutation on the set of facets of $C^*_d$.  By using our identification between the facets of $C^*_d$ and $xy$-words of length $d$, each of these maps also acts as a permutation on the set of words: for an $xy$-word $u=u_1\ldots u_d$, $D$ replaces each letter in $u$ by its opposite (i.e., $x$ by $y$ and $y$ by $x$), $E$ reverses the order of letters in $u$, and $R$ takes the last letter of $u$ and moves it to the front. Thus for any facet $\tau$ of $C^*_d$, $\Sw(D(\tau))=\Sw(\tau)$ and $|\Sw(E(\tau))|=|\Sw(\tau)|$, yielding that $D$ and $E$ are involutions on $\B(i,d)$.
Also, the above description of $R$ implies that $|\Sw(R(\tau))|\leq |\Sw(\tau)|+1$, and so if $|\Sw(\tau)|\leq i-1$, then $|\Sw(R(\tau))|\leq i$.  On the other hand, if $|\Sw(\tau)|=i$, then since $i$ is even, the first and the last letters of $w(\tau)$ --- the $xy$-word corresponding to $\tau$ --- are the same, and hence moving the last letter of $w(\tau)$ to the front does not increase the size of the switch set. We infer that if $|\Sw(\tau)|\leq i$, then $|\Sw(R(\tau))|\leq i$, and so $R$ acts as a permutation on the facets of $\B(i,d)$. As $ERE=R^{-1}$ and $D$ commutes with both $E$ and $R$, it follows that $D$, $E$, and $R$ generate the group $\Z_2\times \D_d$ (in the group of all permutations of $2d$ vertices) that acts transitively on $V$, yielding the result.

The case of an odd $i$ is almost identical, just replace $R$ in the above proof with the map $R'$ that sends $x_d$ to $y_1$, $y_d$ to $x_1$, and is defined by $R'(x_j)=x_{j+1}$ and $R'(y_j)=y_{j+1}$ for $j\in [d-1]$. Then for a facet $\tau$, $|\Sw(R'(\tau))|\leq |\Sw(\tau)|+1$. Moreover, if $|\Sw(\tau)|=i$, then $|\Sw(R'(\tau))|\leq |\Sw(\tau)|$: this is because for $i$ odd, any $xy$-word $u_1\ldots u_d$ with exactly $i$ switches has opposite first and last letters: $u_1\neq u_d$. The result follows since $E$ and $R'$ generate the dihedral group of order $4d$.
\endproof

\medskip\noindent{\it Proof of Theorem \ref{main-B}(c):\,}
Let $A: V \to V$ be an involution on $V$ defined by $x_j\mapsto x_j$ and $y_j \mapsto y_j$ for $j$ odd, and by $x_j\mapsto y_j$ and $y_j \mapsto x_j$ for $j$ even. Then for any facet $\tau\in C^*_d$, $\Sw(A(\tau))=[d-1]-\Sw(\tau)$. Thus $|\Sw(\tau)|\leq d-i-2$ if and only if $|\Sw(A(\tau))|\geq i+1$, and hence $A$ is a simplicial isomorphism between $\B(d-i-2,d)$ and the complement of $\B(i,d)$. \endproof

The proof of Theorem \ref{main-B}(d) takes a bit more work and requires the following lemmas.
\begin{lemma} \label{links-intersection}
The intersection of the links of $x_d$ and $y_d$ in $\B(i,d)$ is $\B(i-1,d-1)$.
\end{lemma}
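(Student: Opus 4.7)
The plan is to translate both sides of the claimed equality into inequalities on switch-set sizes via Lemma \ref{small-faces} and then carry out an explicit comparison of the three fillings $\Fill_{d-1}(\sigma)$, $\Fill_d(\sigma \cup \{x_d\})$, and $\Fill_d(\sigma \cup \{y_d\})$. First I would unpack what it means for a face $\sigma$ to lie in $\lk_{\B(i,d)}(x_d) \cap \lk_{\B(i,d)}(y_d)$: exactly that $\sigma \subseteq V_{d-1}$ and both $\sigma \cup \{x_d\}$ and $\sigma \cup \{y_d\}$ are faces of $\B(i,d)$, which by Lemma \ref{small-faces} means $|\Sw_d(\Fill_d(\sigma \cup \{x_d\}))| \leq i$ and $|\Sw_d(\Fill_d(\sigma \cup \{y_d\}))| \leq i$. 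Meanwhile, $\sigma \in \B(i-1, d-1)$ means $|\Sw_{d-1}(\Fill_{d-1}(\sigma))| \leq i-1$. So it will suffice to show that the two length-$d$ switch-set sizes equal $|\Sw_{d-1}(\Fill_{d-1}(\sigma))|$ and $|\Sw_{d-1}(\Fill_{d-1}(\sigma))|+1$ in some order.

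For the key calculation, I would take a nonempty $\sigma \subseteq V_{d-1}$ of the form $\{z_{j_1}, \ldots, z_{j_s}\}$ with $j_1 < \cdots < j_s \leq d-1$. The three words $w(\Fill_{d-1}(\sigma))$, $w(\Fill_d(\sigma \cup \{x_d\}))$, and $w(\Fill_d(\sigma \cup \{y_d\}))$ agree on positions $1, \ldots, j_s$, since those positions are determined by $\sigma$ alone. Past position $j_s$, the first word is filled with copies of the letter matching $z_{j_s}$, whereas the second and third words have their tails filled with all $x$'s and all $y$'s respectively (the new largest index being $d$, with letter $x$ or $y$). In particular, no switches occur strictly past position $j_s$ in any of the three words, so switches past position $j_s-1$ can arise only at position $j_s$ itself, and a short check yields
\[
|\Sw_d(\Fill_d(\sigma \cup \{x_d\}))| = |\Sw_{d-1}(\Fill_{d-1}(\sigma))| + [z_{j_s} = y_{j_s}],
\]
\[
|\Sw_d(\Fill_d(\sigma \cup \{y_d\}))| = |\Sw_{d-1}(\Fill_{d-1}(\sigma))| + [z_{j_s} = x_{j_s}].
\]
Since exactly one of the two Iverson brackets equals $1$, both switch-set sizes are $\leq i$ iff the larger of them, namely $|\Sw_{d-1}(\Fill_{d-1}(\sigma))|+1$, is $\leq i$, which is equivalent to $\sigma \in \B(i-1, d-1)$.

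The main obstacle is purely bookkeeping: one must keep careful track of which of the three fillings flips at position $j_s$, and verify that the tails contribute no additional switches. The degenerate case $\sigma = \emptyset$ should be handled separately by inspection, noting that $\{x_d\}$ and $\{y_d\}$ each have fillings with zero switches, so $\emptyset$ lies in the intersection of the two links whenever it lies in $\B(i-1,d-1)$ under the authors' conventions.
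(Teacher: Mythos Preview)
Your proposal is correct and follows essentially the same approach as the paper: both reduce the question, via Lemma~\ref{small-faces}, to comparing $|\Sw_{d-1}(\Fill_{d-1}(\sigma))|$ with $|\Sw_d(\Fill_d(\sigma\cup\{x_d\}))|$ and $|\Sw_d(\Fill_d(\sigma\cup\{y_d\}))|$, and both observe that exactly one of the two length-$d$ fillings picks up one extra switch. The only cosmetic difference is that the paper first replaces $\Fill_d(\sigma\cup\{x_d\})$ by $\Fill_{d-1}(\sigma)\cup\{x_d\}$ (noting these have switch sets of the same size) and then argues via the containment $\Sw_d(\tau\cup\{x_d\})\subseteq\Sw_{d-1}(\tau)\sqcup\{d-1\}$, whereas you compare the three fillings directly position by position; your route is slightly more explicit but otherwise identical in substance.
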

\begin{lemma} \label{shellability}
The stars of $x_d$ and $y_d$ in $\B(i,d)$ are shellable $(d-1)$-dimensional complexes.
\end{lemma}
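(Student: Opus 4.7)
The plan is to prove shellability of $\Star_{\B(i,d)} x_d$; shellability of $\Star_{\B(i,d)} y_d$ then follows via the involution $D$ of Theorem~\ref{main-B}(b), which swaps $x_d$ and $y_d$ while preserving $\B(i,d)$. I will induct on $d$, with the trivial base $d=1$, where $\Star x_1$ is a single vertex.

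For the inductive step, I partition the facets of $\Star_{\B(i,d)} x_d$ according to the letter $u_{d-1}$ of their encoding words. Let $\mathcal{P}$ denote the subcomplex generated by the facets with $u_{d-1}=x$, and $\mathcal{Q}$ the subcomplex generated by those with $u_{d-1}=y$. Because $u_{d-1}=x$ produces no switch at position $d-1$, the word $u_1\cdots u_{d-1}$ has the same switch count as $u$, and so deleting $x_d$ from a $\mathcal{P}$-facet produces a facet of $\Star_{\B(i,d-1)}x_{d-1}$; hence $\mathcal{P}$ is the cone with apex $x_d$ over $\Star_{\B(i,d-1)}x_{d-1}$. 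For $\mathcal{Q}$, the forced switch at position $d-1$ implies $|\Sw(u_1\cdots u_{d-1})|\leq i-1$, so $\mathcal{Q}$ is the cone with apex $x_d$ over $\Star_{\B(i-1,d-1)}y_{d-1}$. By the inductive hypothesis both bases are shellable, and since the cone over a shellable complex is shellable, each of $\mathcal{P}$ and $\mathcal{Q}$ is individually shellable.

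The main claim is that concatenating any shelling of $\mathcal{P}$ followed by any shelling of $\mathcal{Q}$ is a shelling of $\mathcal{P}\cup\mathcal{Q}=\Star_{\B(i,d)}x_d$. The only condition to verify is that for each facet $\tau\in\mathcal{Q}$, the intersection $2^\tau\cap\bigcup_{\tau'\text{ earlier}}2^{\tau'}$ is pure of dimension $d-2$. The technical heart of the argument — and the step I expect to be the main obstacle — is the identity
\[
2^\tau \;\cap\; \bigcup_{\sigma \text{ a facet of } \mathcal{P}} 2^{\sigma} \;=\; 2^{\tau\setminus\{y_{d-1}\}}.
\]
The inclusion $\subseteq$ is immediate because no $\mathcal{P}$-facet contains $y_{d-1}$. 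For the reverse inclusion, given any face $\nu\subseteq\tau\setminus\{y_{d-1}\}$, I will exhibit a $\mathcal{P}$-facet containing $\nu$: writing $u$ for the word encoding $\tau$ (so $u_{d-1}=y$ and $u_d=x$), set $u'':=u_1\cdots u_{d-2}xx$. A short case analysis on $u_{d-2}$ shows that $|\Sw(u'')|$ equals $|\Sw(u)|-2$ when $u_{d-2}=x$ and $|\Sw(u)|$ when $u_{d-2}=y$ (with the edge case $d=2$ handled directly), so in every case $|\Sw(u'')|\leq i$ and $F(u'')$ is a facet of $\mathcal{P}$. Since $F(u'')$ agrees with $\tau$ at every position except $d-1$, it contains $\nu$.

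Once this identity is established, the shelling condition for $\tau$ follows at once: the intersection $2^\tau\cap\bigcup_{\tau'\text{ earlier}}2^{\tau'}$ is the union of the $(d-2)$-simplex $2^{\tau\setminus\{y_{d-1}\}}$ (the contribution from $\mathcal{P}$) with $2^\tau\cap\bigcup_{\tau'\in\mathcal{Q},\,\tau'\text{ earlier}}2^{\tau'}$, which is pure of dimension $d-2$ by the shelling of $\mathcal{Q}$. A union of pure $(d-2)$-dimensional subcomplexes of $2^\tau$ is itself pure of dimension $d-2$, which completes the argument.
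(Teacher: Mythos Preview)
Your argument is correct. The induction on $d$ is well-formed (with the implicit understanding that the hypothesis covers all values of $i$, and with $\mathcal{Q}$ empty when $i=0$); the key identity $2^\tau\cap\mathcal{P}=2^{\tau\setminus\{y_{d-1}\}}$ is verified cleanly; and the concatenation step goes through, the first $\mathcal{Q}$-facet being handled automatically since its intersection with the earlier facets is exactly the single $(d-2)$-simplex $2^{\tau\setminus\{y_{d-1}\}}$.

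This is, however, a genuinely different proof from the one in the paper. The paper does not induct. Instead it writes down an explicit global shelling order on all facets of $\Star_{\B(i,d)}x_d$: order them by their switch sets $\Sw(\tau)\subseteq[d-1]$, first by cardinality and then lexicographically. It then proves directly that the restriction face of $\tau$ in this order is $\SwEl(\tau)=\{z_j\in\tau : j\in\Sw(\tau)\}$, by showing that for any $F\subseteq\tau$, the facet $\Fill_d(F\cup\{x_d\})$ either precedes $\tau$ or equals $\tau$ (forcing $F\supseteq\SwEl(\tau)$). What the paper's approach buys is an explicit description of the restriction faces, which feeds directly into $h$-vector computations later on and makes the shelling visibly compatible with the combinatorics of switch sets. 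What your approach buys is a clean structural picture: $\Star_{\B(i,d)}x_d$ decomposes as a cone over $\Star_{\B(i,d-1)}x_{d-1}$ glued to a cone over $\Star_{\B(i-1,d-1)}y_{d-1}$ along a codimension-one face of each $\mathcal{Q}$-facet, which is a nice recursive description of these stars in its own right.
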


Assuming the lemmas, the {\it proof of Theorem \ref{main-B}(d)} is almost immediate. We use induction on $i$. For $i=0$, $\B(0,d)$ is a disjoint union of two $(d-1)$-dimensional simplices, and so it is a combinatorial manifold that retracts onto $\S^0$.  For $0<i<d-1$, we proceed as follows. Since every facet of $C^*_d$, and hence also of $\B(i,d)$, contains either $x_d$ or $y_d$, it follows that \begin{equation} \label{union}
\B(i,d)=\Star x_d \cup \Star y_d.
\end{equation}
where both stars are computed in $\B(i,d)$. Also, since no face of $C^*_d$ contains both $x_d$ and $y_d$, \begin{equation} \label{intersection}
\Star x_d \cap \Star y_d= \lk x_d \cap \lk y_d=\B(i-1,d-1).
\end{equation}
Here the last step is by Lemma \ref{links-intersection}, and as before the stars and links are computed in $\B(i,d)$. As stars are contractible and hence have vanishing reduced homology, an application of the Mayer-Vietoris sequence using Eqs.~(\ref{union}) and (\ref{intersection}) implies that
\begin{displaymath}
\widetilde{H}_j(\B(i,d);\Z)=\widetilde{H}_{j-1}(\B(i-1,d-1); \Z)=
\left\{\begin{array}{ll}
0, & \mbox{ if $j\neq i$}\\
\Z, & \mbox{ if $j=i$},
\end{array}
\right.
\end{displaymath}
where the last step is by inductive hypothesis. Thus $\B(i,d)$ has the same homology as $\S^i$.

To show that $\B(i,d)$ is a combinatorial manifold, recall that according to Lemma \ref{shellability}, the stars of $x_d$ and $y_d$ in $\B(i,d)$ are shellable full-dimensional proper subcomplexes of $C^*_d$, and hence combinatorial balls. By Lemma \ref{links-intersection} together with our inductive hypothesis, these two combinatorial balls intersect along a combinatorial $(d-2)$-manifold, $\B(i-1,d-1)$, that is contained in their boundaries (see Eq.~(\ref{intersection})). Therefore, the union of these balls, is a $(d-1)$-dimensional combinatorial manifold, as required. \endproof

We close this section with proofs of Lemmas \ref{links-intersection} and \ref{shellability}.

\smallskip\noindent{\it Proof of Lemma \ref{links-intersection}:\,}
Let $\sigma\in C^*_{d-1}$ and let $\tau=\Fill_{d-1}(\sigma)$. Then $\tau\cup\{x_d\}$ and $\Fill_d(\sigma\cup\{x_d\})$ have switch sets of the same cardinality, and so do $\tau\cup\{y_d\}$ and $\Fill_d(\sigma\cup\{y_d\})$. Thus we infer from Lemma \ref{small-faces} that $\sigma\in \lk_{\B(i,d)}(x_d)\cap \lk_{\B(i,d)}(y_d)$ if and only if $|\Sw_d(\tau\cup\{x_d\})|\leq i$ and $|\Sw_d(\tau\cup\{y_d\})|\leq i$.
The lemma follows since
\begin{displaymath}
\Sw_d(\tau\cup\{x_d\})\subseteq \Sw_{d-1}(\tau)\sqcup{\{d-1\}} \quad
\mbox{and} \quad
\Sw_d(\tau\cup\{y_d\})\subseteq \Sw_{d-1}(\tau)\sqcup{\{d-1\}},
\end{displaymath}
and since one of these two inclusions holds as equality.\endproof

For the proof of Lemma \ref{shellability} we need to introduce a few more definitions. We start by defining a total order, $\prec$, on the set of subsets of $[d-1]$: for $I,J\subseteq [d-1]$ define \begin{displaymath}
I\prec J \mbox{ iff } |I|<|J| \mbox{ or } (|I|=|J| \mbox{ and } I<_{\lex}J), \end{displaymath}
where $<_{\lex}$ denotes the usual lexicographic order, that is, $I<_{\lex} J$ if the minimal element in the symmetric difference of $I$ and $J$ belongs to $I$. For example, for subsets of $[3]$, we have:
\begin{displaymath}
\emptyset \prec \{1\} \prec \{2\} \prec \{3\} \prec \{1, 2\} \prec \{1, 3\} \prec \{2,3\} \prec \{1,2,3\}.
\end{displaymath}

Since $\B(i,d)$ admits a free involution that maps $x_d$ to $y_d$, to prove Lemma \ref{shellability} it is enough to show that the star of $x_d$ in $\B(i,d)$ is shellable. Recall that $\Sw$ is a map that takes as its input a facet of $C^*_d$ and outputs a subset of $[d-1]$ --- the switch set of that facet. Conversely, given a subset $J=\{j_1<j_2<\cdots<j_k\}$ of $[d-1]$, there is a {\em unique} facet of $C^*_d$ that contains $x_d$ and has $J$ as its switch set: this facet is $\Fill_d(\{z_{j_1},\ldots, z_{j_k}, x_d\})$, where $z_{j_k}=y_{j_k}$, $z_{j_{k-1}}=x_{j_{k-1}}$, and, more generally, $z_{j_{k-s}}=y_{j_{k-s}}$ for $s$ even, and $z_{j_{k-s}}=x_{j_{k-s}}$ for $s$ odd. Therefore, $\Sw$ defines a bijection between the collection of facets of $C^*_d$ containing $x_d$ and the collection of subsets of $[d-1]$, and hence also between the collection of facets of $\B(i,d)$ containing $x_d$ and between the collection of subsets of $[d-1]$ of size at most $i$. Thus the linear order $\prec$  on subsets of $[d-1]$ induces a linear order on facets of $\B(i,d)$ containing $x_d$: for such $\tau, \tau'$ we define

\begin{displaymath}
\tau \prec \tau' \mbox{ iff } \Sw(\tau) \prec \Sw(\tau').
\end{displaymath}

In addition to the switch set of a facet $\tau$ (that is merely a set of indices) it is sometimes convenient to consider the set of elements of $\tau$ that are in switch positions, that is, the set \begin{displaymath}
\SwEl(\tau):= \tau \cap(\cup_{j\in \Sw(\tau)}\{x_j,y_j\}).
\end{displaymath}

With all these definitions at our disposal, we are ready to prove Lemma \ref{shellability}. In fact, we prove the following more precise result.
\begin{lemma} \label{shelling-order}
The order $\prec$ is a shelling order of the star of $x_d$ in $\B(i,d)$: for each facet $\tau \in\Star x_d$, the restriction of $\tau$ is given by $\SwEl(\tau)$.
\end{lemma}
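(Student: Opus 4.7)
The plan is to verify directly the ``restriction'' characterization of shelling recalled in Section~2: for each facet $\tau \in \Star_{\B(i,d)} x_d$, the collection $2^{\tau} \setminus \bigcup_{\tau' \prec \tau} 2^{\tau'}$ has a unique minimum element, and we claim this element is $\SwEl(\tau)$. This splits into two directions: (A) every $\sigma \subseteq \tau$ with $\SwEl(\tau) \not\subseteq \sigma$ already lies in some earlier facet of $\Star_{\B(i,d)} x_d$, and (B) $\SwEl(\tau)$ itself lies in no earlier facet.

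For direction (A), I would pick $j \in \Sw(\tau)$ with $z_j \notin \sigma$ (which exists since $\SwEl(\tau) \not\subseteq \sigma$, where $z_j$ is the unique element of $\tau \cap \{x_j, y_j\}$) and ``flip'' position $j$: set $\tau' := (\tau \setminus \{z_j\}) \cup \{\bar z_j\}$. Since $j \leq d-1$, the vertex $x_d$ remains in $\tau'$, and since $z_j \notin \sigma$ we have $\sigma \subseteq \tau'$. Comparing the $xy$-words of $\tau$ and $\tau'$ at positions $j-1, j, j+1$ shows that position $j$ drops out of the switch set (the new letter at $j$ equals the old letter at $j+1$), while position $j-1$, if present, is toggled: in symbols, $\Sw(\tau') = \Sw(\tau) \,\triangle\, (\{j-1, j\} \cap [d-1])$. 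A brief case check then yields $|\Sw(\tau')| \leq |\Sw(\tau)| \leq i$ (so $\tau' \in \B(i,d)$) and $\Sw(\tau') \prec \Sw(\tau)$: either the cardinality strictly decreases, or (when $j>1$ and $j-1 \notin \Sw(\tau)$) the cardinality is unchanged and the minimum of the symmetric difference lies in $\Sw(\tau')$, making it lex-smaller.

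For direction (B), write $\Sw(\tau) = \{j_1 < \cdots < j_k\}$ and $\sigma := \SwEl(\tau) \cup \{x_d\}$. Any facet $\tau''$ of $C^*_d$ containing $\sigma$ has its $xy$-word pinned at positions $j_1, \ldots, j_k, d$, and the pinned letters alternate in type (as one reads off from $\tau$, since $\Sw(\tau) = \{j_1,\ldots,j_k\}$ and $x_d \in \tau$). Partition $[d-1]$ into the blocks $[1, j_1-1]$, $[j_l, j_{l+1}-1]$ for $l = 1, \ldots, k-1$, and $[j_k, d-1]$. A parity argument on each middle or last block forces at least one switch of $\tau''$ inside it (its pinned endpoints have opposite types), hence $|\Sw(\tau'')| \geq k$. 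Equality forces zero switches in the first block and exactly one in each other block, at some position $j_l' \in [j_l, j_{l+1}-1]$ (resp.\ $j_k' \in [j_k, d-1]$); in particular $\Sw(\tau'') = \{j_1' < \cdots < j_k'\}$ with $j_l' \geq j_l$ for every $l$. If $\tau'' \neq \tau$, some inequality is strict: letting $l_0$ be the smallest index with $j_{l_0}' > j_{l_0}$, the element $j_{l_0}$ is the minimum of $\Sw(\tau) \,\triangle\, \Sw(\tau'')$ and lies in $\Sw(\tau)$, so $\Sw(\tau) <_\lex \Sw(\tau'')$. Combined with the case $|\Sw(\tau'')| > k$, this yields $\tau \preceq \tau''$ for every facet $\tau''$ of $C^*_d$ containing $\sigma$; in particular no $\tau'' \prec \tau$ contains $\SwEl(\tau)$.

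Putting (A) and (B) together identifies $\SwEl(\tau)$ as the unique minimum of $2^\tau \setminus \bigcup_{\tau' \prec \tau} 2^{\tau'}$, simultaneously proving that $\prec$ is a shelling order of $\Star_{\B(i,d)} x_d$ and that $\Res(\tau) = \SwEl(\tau)$. The main obstacle is direction (B): one must analyze not just the canonical filling $\Fill_d(\sigma) = \tau$ but every facet of $C^*_d$ extending $\sigma$, and show via the block/parity analysis that any non-canonical extension pushes its switch positions strictly to the right within their blocks, hence lex-up. Direction (A) is the comparatively routine local flip.
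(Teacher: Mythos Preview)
Your proof is correct. Both directions (A) and (B) check out: the single-flip argument for (A) and the block/parity analysis for (B) are clean and complete.

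The paper takes a closely related but slightly different route. For direction (A), rather than flipping one switch position, the paper sends an arbitrary face $F\subseteq\tau$ to the canonical facet $\sigma:=\Fill_d(F\cup\{x_d\})$ and argues directly that $\Sw(\sigma)\preceq\Sw(\tau)$, with equality forcing $F\supseteq\SwEl(\tau)$. Your local flip produces \emph{some} earlier facet containing $\sigma$; the paper's filling produces the $\prec$-\emph{minimal} such facet, which lets the dichotomy be read off in one step. Both arguments are short; the paper's reuses the $\Fill_d$ machinery already set up for Lemma~\ref{small-faces}, while yours is self-contained.

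For direction (B), the paper is actually silent: it proves only that every new face contains $\SwEl(\tau)$, and does not verify that $\SwEl(\tau)$ itself is new. Your block decomposition of $[d-1]$ and the observation that each non-initial block must contain at least one switch (with the unique switch sitting at position $\geq j_l$) supplies exactly the missing half, and moreover shows the stronger fact that $\tau$ is the $\prec$-minimum among \emph{all} facets of $C^*_d$ containing $\SwEl(\tau)\cup\{x_d\}$. So your write-up is in this respect more complete than the paper's.
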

\begin{example} Below is the list of facets of the star of $x_4$ in $\B(2,4)$ ordered according to $\prec$ along with their switch sets and restriction sets. $$ \begin{array}{lll}
\mbox{Facet} & \mbox{Switch set}\qquad & \mbox{Restriction} \\
\{x_1,x_2,x_3,x_4\} \qquad & \emptyset & \emptyset  \\
\{y_1, x_2,x_3,x_4\} & \{1\} & \{y_1\} \\
\{y_1, y_2,x_3,x_4\} & \{2\} & \{y_2\} \\
\{y_1, y_2,y_3,x_4\} & \{3\} & \{y_3\} \\
\{x_1, y_2, x_3,x_4\} & \{1,2\} & \{x_1,y_2\}\\
\{x_1, y_2, y_3, x_4\} & \{1,3\} & \{x_1, y_3\}\\
\{x_1, x_2,y_3,x_4\} & \{2,3\} & \{x_2, y_3\}
\end{array}$$
\end{example}

\smallskip\noindent{\it Proof of Lemma \ref{shelling-order}:\,}

Consider a facet $\tau \in \Star_{\B(i,d)}(x_d)$ and a face $F \subseteq \tau$. We need to show that either there is a facet $\sigma \in \Star_{\B(i,d)}(x_d)$ such that $\sigma \prec \tau$ and $F \subseteq \sigma$ or that $F \supseteq \SwEl(\tau)$.

Suppose $F = \{z_{j_1},\ldots,z_{j_r}\}$ with $j_1 < \cdots < j_r$ and $z_{j_k} \in \{x_{j_k},y_{j_k}\}$ for all $k$, and consider the facet $\sigma := \Fill_d(F \cup x_d)$. Observe that $|\Sw(\sigma)| \leq |\Sw(\tau)|$. If $|\Sw(\sigma)| < |\Sw(\tau)|$, then $\sigma \prec \tau$, and we are done as $F\subseteq \sigma$. Hence we may further suppose that $|\Sw(\sigma)| = |\Sw(\tau)|$.

Moreover, if $j_k \in \Sw(\sigma)$, then the symbols occurring in $w(\sigma)$ in positions $j_k$ and $j_{k+1}$ are opposite to each other (one is $x$ and the other is $y$); since $F \subseteq \tau$, it then follows that there is some $\ell_k \in \Sw(\tau)$ such that $j_k \leq \ell_k < j_{k+1}$ (with the convention that $j_{r+1} = d$). Thus the $k$-th smallest entry of $\Sw(\sigma)$ is no larger than the $k$-th smallest entry of $\Sw(\tau)$, and hence $\sigma \leq_{\lex}\tau$. Therefore, either $\sigma \prec \tau$ or $\sigma = \tau$, in which case $F \supseteq \SwEl(\tau)$. \endproof

\begin{remark}
Using Lemma \ref{shellability}, it is not hard to show that the complex $\B(i,d)$ collapses (by a sequence of elementary collapses) onto $\B(i,d-1)$, which in turn collapses onto $\B(i,d-2)$, etc., until  this series of collapses reaches $\B(i,i+1)=C^*_{i+1}$. As the complex $C^*_{i+1}$ is a combinatorial $i$-dimensional sphere, results of \cite[Chapter 3]{RourkeSanderson} imply that the manifold $\B(i,d)$ is a disc bundle over $\S^i$.
\end{remark}

\section{The boundary of $\B(i,d)$}

The goal of this section is to prove that the boundary of $\B(i,d)$, $\partial\B(i,d)$, triangulates $\S^i\times \S^{d-i-2}$. Since this boundary is a $(d-2)$-dimensional subcomplex of $C^*_d$, and hence is a codimension-1 submanifold of a combinatorial sphere, the following result of Matthias Kreck \cite{Kreck} is handy.

\begin{theorem} \label{Kreck}
Let $M$ be a simply connected codimension-1 submanifold of $\S^{d-1}$, where $d\geq 6$. If $M$ has the homology of $\S^i\times\S^{d-i-2}$ and $1<i\leq\frac{d}{2}-1$, then $M$ is homeomorphic to $\S^i\times\S^{d-i-2}$.
\end{theorem}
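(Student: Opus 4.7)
The plan is to split $\S^{d-1}$ along $M$ into two compact codimension-$0$ pieces, identify each piece as a trivial disk bundle over a sphere, and read off the product structure on $M$ as their common boundary. Set $n := d-2 \geq 4$ and $j := d-i-2$, so by hypothesis $2 \leq i \leq j$. Since $M$ is simply connected it is orientable and two-sided in $\S^{d-1}$, hence by Jordan--Brouwer it separates $\S^{d-1}$ into compact submanifolds $W_1, W_2$ with $W_1 \cap W_2 = M = \partial W_1 = \partial W_2$. The goal will be to show, after relabeling, that $W_1 \cong \S^i \times D^{j+1}$ and $W_2 \cong D^{i+1} \times \S^j$.

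As a first step, apply the Mayer--Vietoris sequence of $(\S^{d-1}; W_1, W_2)$ and Alexander duality to $\S^{d-1} \setminus M \simeq W_1 \sqcup W_2$. Using $H_*(\S^{d-1})$ together with $H_*(M) = H_*(\S^i \times \S^j)$, these computations (after swapping labels if necessary) yield $H_*(W_1) \cong H_*(\S^i)$ and $H_*(W_2) \cong H_*(\S^j)$. Van Kampen applied to the same decomposition, combined with $\pi_1(M) = \pi_1(\S^{d-1}) = 1$, forces $\pi_1(W_1) = \pi_1(W_2) = 1$. The Hurewicz and Whitehead theorems then promote these homological identifications to homotopy equivalences $W_1 \simeq \S^i$ and $W_2 \simeq \S^j$.

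The geometric heart is to realize a generator of $\pi_i(W_1) \cong \mathbb{Z}$ by a locally flat embedded sphere $\Sigma \hookrightarrow \mathrm{int}(W_1)$ with trivial normal bundle. The hypothesis $2 \leq i \leq j$ places the dimensions inside Haefliger's metastable range $2m \geq 3(i+1)$ for $m = \dim W_1 = d-1$, so the prescribed homotopy class is realized by an embedding. The stable normal bundle of $\Sigma$ in $W_1$ equals the restriction of the stably trivial normal bundle of $\Sigma \subset \S^{d-1}$ (stable triviality because both $T\S^{d-1}$ and $T\Sigma$ are stably trivial), while $i \leq d/2 - 1$ is exactly the condition $i-1 \leq (d-1-i) - 2$ that places us in the stable range for the map $\pi_{i-1}(SO(d-1-i)) \to \pi_{i-1}(SO)$; hence the unstable normal bundle is also trivial. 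A closed tubular neighborhood $\nu(\Sigma) \cong \S^i \times D^{j+1}$ therefore sits inside $W_1$, with boundary $\S^i \times \S^j$.

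The argument closes with an h-cobordism. A final Mayer--Vietoris calculation, using $W_1 \simeq \S^i$ and the known homology of $\nu(\Sigma)$, shows that the complement $X := W_1 \setminus \mathrm{int}(\nu(\Sigma))$ is a simply-connected $(d-1)$-manifold whose relative homology with respect to each boundary component vanishes. Thus $X$ is a simply-connected $h$-cobordism from $\S^i \times \S^j$ to $M$, and since $\dim X = d-1 \geq 5$ Smale's $h$-cobordism theorem gives $X \cong M \times [0,1]$; in particular $M \cong \S^i \times \S^j = \S^i \times \S^{d-i-2}$. The principal technical obstacle is the geometric step: guaranteeing that the core sphere $\Sigma$ both exists (via Haefliger) and has trivial rather than merely stably trivial normal bundle (via stable-range bundle theory), which is precisely what the hypotheses $1 < i \leq d/2 - 1$ and $d \geq 6$ are calibrated to ensure. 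Kreck's original argument reaches the same conclusion through his modified surgery formalism, which organizes these bundle and bordism obstructions systematically.
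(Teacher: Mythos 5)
The paper does not supply a proof of this statement: Theorem~\ref{Kreck} is quoted directly from Kreck's article \cite{Kreck}, where it is obtained through Kreck's modified-surgery framework (normal $(k-1)$-smoothings, bordism of manifolds with a fixed normal $(k-1)$-type, and the associated obstruction monoid). Your proposal takes a genuinely different and more hands-on route: split $\S^{d-1}$ along $M$ into $W_1\cup_M W_2$, identify $W_1$ and $W_2$ as homotopy spheres $\S^i$ and $\S^{d-i-2}$ via Alexander/Lefschetz duality, van Kampen, and Whitehead's theorem, embed a core $i$-sphere in $W_1$ with trivial normal bundle using Haefliger's metastable embedding theorem together with the stable-range computation of $\pi_{i-1}(SO(d-1-i))$, and finish by recognizing the complement of its tubular neighborhood as a simply connected $h$-cobordism from $\S^i\times\S^{d-i-2}$ to $M$. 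The hypotheses $1<i\le d/2-1$ and $d\geq 6$ are consumed exactly where you say: $i\geq 2$ keeps everything simply connected, $i\le d/2-1$ is simultaneously the metastable embedding range and the range in which stably trivial rank-$(d-1-i)$ bundles over $\S^i$ are trivial, and $d\geq 6$ makes $\dim X=d-1\geq 5$ for the $h$-cobordism theorem. This is sound and avoids bordism machinery; what Kreck's formalism buys is a systematic framework that handles a much broader class of such recognition problems uniformly.

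Two points should be made explicit. First, Alexander duality alone shows only that $\widetilde{H}_*(W_1)\oplus\widetilde{H}_*(W_2)$ is concentrated in degrees $i$ and $d-i-2$ (or is $\Z^2$ in degree $i$ when $i=d-i-2$); you must still rule out the degenerate split in which one side, say $W_2$, is acyclic. In that case $W_2$ would be a simply connected, acyclic, hence contractible, compact $(d-1)$-manifold with simply connected boundary and $d-1\geq 5$, so $W_2$ would be a disc and $M\cong\S^{d-2}$, contradicting the assumed homology of $M$. Second, and more substantively, your argument is phrased in the smooth category (tubular neighborhoods, Haefliger, vector bundles classified by $\pi_{i-1}(SO(k))$), while in the paper's application $M=\partial\B(i,d)$ is a PL submanifold of the PL sphere $C^*_d$. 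The argument does transfer: in codimension $d-1-i\geq 3$ and ambient dimension $\geq 5$ one has Irwin's PL embedding theorem, the Rourke--Sanderson/Haefliger--Wall theory of PL normal block bundles (whose classification agrees with the vector-bundle one in this range), and the PL $h$-cobordism theorem. But the category should be stated so that the conclusion matches the category in which it is applied; as written the proposal proves the smooth statement.
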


To be able to apply Theorem \ref{Kreck}, we need a few lemmas. In the following, we denote by $\C(i,d)$ the complement of $\B(i,d)$ in $C^*_d$ (as defined in Theorem \ref{main-B}(c)).

\begin{lemma} \label{skeleton}
Let $0\leq i \leq d-1$, and let $j=\min\{i,d-i-2\}$. Then the complex $\partial\B(i,d)$ contains the entire $j$-skeleton of $C^*_d$. \end{lemma}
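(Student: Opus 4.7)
My plan is to show directly that every $j$-face $\sigma$ of $C^*_d$ lies in $\partial\B(i,d)$; since $\partial\B(i,d)$ is a subcomplex, all lower-dimensional faces of $C^*_d$ are then covered for free. To place such a $\sigma$ in the boundary, it suffices to exhibit a $(d-2)$-face $\rho$ of $C^*_d$ with $\sigma\subseteq\rho$ that is contained in one facet of $\B(i,d)$ and one facet of its complement $\C(i,d)$, since then $\rho$ lies in exactly one facet of $\B(i,d)$ and so belongs to $\partial\B(i,d)$.

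Next, I will organize the facets of $C^*_d$ containing $\sigma$ into a $(d-j-1)$-dimensional cube graph, in which two facets are joined by an edge precisely when their $xy$-encodings differ at exactly one position (necessarily one of the $d-j-1$ positions free of $\sigma$). The elementary observation driving the argument is that flipping the letter at position $\ell$ only affects whether the pairs $(\ell-1,\ell)$ and $(\ell,\ell+1)$ are switches, so $|\Sw|$ changes by at most $2$ along every edge of the cube.

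Then I need to locate one vertex of this cube inside $\B(i,d)$ and another inside $\C(i,d)$. The filling $\Fill_d(\sigma)$ has at most $j\leq i$ switches (exactly as in the proof of part (a)), so it lies in $\B(i,d)$. For a facet with many switches I invoke the involution $A$ from the proof of part (c): since $\Sw(A(F))=[d-1]\setminus\Sw(F)$, the facet $A(\Fill_d(A(\sigma)))$ contains $\sigma$ and has switch count at least $(d-1)-j\geq i+1$, where the last inequality uses $j\leq d-i-2$. This facet therefore belongs to $\C(i,d)$.

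Finally, any path in the cube from the filling to this ``anti-filling'' must, by the $2$-Lipschitz bound on $|\Sw|$, contain an edge whose endpoints $F,F'$ satisfy $|\Sw(F)|\leq i$ and $|\Sw(F')|\geq i+1$. Then $F\in\B(i,d)$, $F'\in\C(i,d)$, and $\rho:=F\cap F'$ is a $(d-2)$-face of $C^*_d$ containing $\sigma$ that is shared between $\B(i,d)$ and $\C(i,d)$; hence $\rho\in\partial\B(i,d)$ and consequently $\sigma\in\partial\B(i,d)$. The one step that needs honest verification is the at-most-$2$ Lipschitz estimate for $|\Sw|$ under a single flip (with attention to the boundary cases $\ell=1$ and $\ell=d$, where a flip can change only a single pair); everything else is bookkeeping on top of parts (a) and (c), Lemma~\ref{small-faces}, and an intermediate-value argument on the cube.
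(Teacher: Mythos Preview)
Your argument is correct, but it takes a longer and more constructive route than the paper. The paper dispatches the lemma in one line: by part~(a), $\B(i,d)$ contains the $i$-skeleton of $C^*_d$; by parts~(c) and~(a), the complementary complex $\C(i,d)\cong\B(d-i-2,d)$ contains the $(d-i-2)$-skeleton; and finally $\partial\B(i,d)=\B(i,d)\cap\C(i,d)$, so the boundary contains the $j$-skeleton. The last equality is the standard fact that when the facets of a combinatorial sphere are split into two full-dimensional pieces, their intersection is precisely the common boundary (any ridge of $C^*_d$ lies in two facets, and it is a boundary ridge of $\B(i,d)$ exactly when one of them is in $\B(i,d)$ and the other in $\C(i,d)$; the lower-dimensional faces follow since links in $C^*_d$ are strongly connected).

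What you do instead is prove the nontrivial inclusion $\B(i,d)\cap\C(i,d)\subseteq\partial\B(i,d)$ explicitly, tailored to $j$-faces: you place $\sigma$ in a facet of $\B(i,d)$ (via $\Fill_d(\sigma)$) and in a facet of $\C(i,d)$ (via the clever ``anti-filling'' $A(\Fill_d(A(\sigma)))$), and then walk along the cube of facets through $\sigma$ until you cross the boundary. This is perfectly valid and more self-contained, since it does not invoke the $\partial\B=\B\cap\C$ identity as a black box. One small simplification: the $2$-Lipschitz estimate on $|\Sw|$ is actually unnecessary. Once you have a path in the cube starting at a facet with $|\Sw|\le i$ and ending at one with $|\Sw|\ge i+1$, the \emph{first} edge at which $|\Sw|$ exceeds $i$ already has the desired property, regardless of how much $|\Sw|$ jumps along edges.
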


\begin{proof}
Consider two subcomplexes of $C^*_d$: $\B(i,d)$ and its complement $\C(i,d)$. According to Theorem \ref{main-B}(c), $\C(i,d)$ is simplicially isomorphic to $\B(d-i-2,d)$. Theorem \ref{main-B}(a), then implies that $\B(i,d)$ contains the $i$-skeleton of $C^*_d$, and $\C(i,d)$ contains the $(d-i-2)$-skeleton of $C^*_d$.  The result follows since $\partial\B(i,d)$ is the intersection of $\B(i,d)$ and $\C(i,d)$.
\end{proof}

One immediate consequence of this lemma is
\begin{corollary} \label{simply-connectedness}
For all $2\leq i \leq d-4$, the complex $\partial\B(i,d)$ is simply connected.
\end{corollary}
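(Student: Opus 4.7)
The plan is to reduce the claim to a statement about the $2$-skeleton, invoking only Lemma~\ref{skeleton} and the standard fact that the fundamental group of a (CW or simplicial) complex depends only on its $2$-skeleton.

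First I would unpack the hypothesis $2\le i\le d-4$ at the level of the parameter $j:=\min\{i,d-i-2\}$ appearing in Lemma~\ref{skeleton}. Under this assumption both $i\ge 2$ and $d-i-2\ge 2$, so $j\ge 2$. Lemma~\ref{skeleton} then asserts that $\partial\B(i,d)$ contains the entire $2$-skeleton of $C^*_d$; since $\partial\B(i,d)$ is in turn a subcomplex of $C^*_d$, the $2$-skeleton of $\partial\B(i,d)$ coincides with the $2$-skeleton of $C^*_d$.

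Next I would appeal to the general principle that for any simplicial complex $K$ the inclusion of the $2$-skeleton $K^{(2)}\hookrightarrow K$ induces an isomorphism of fundamental groups (cells of dimension $\ge 3$ have simply connected boundary and hence cannot kill or introduce loops). Applying this to both $\partial\B(i,d)$ and $C^*_d$ and using the previous step gives
\[
\pi_1\!\bigl(\partial\B(i,d)\bigr)\;\cong\;\pi_1\!\bigl((\partial\B(i,d))^{(2)}\bigr)\;=\;\pi_1\!\bigl((C^*_d)^{(2)}\bigr)\;\cong\;\pi_1(C^*_d).
\]
Finally, since the range $2\le i\le d-4$ forces $d\ge 6$, the complex $C^*_d$ is a combinatorial $(d-1)$-sphere of dimension at least $5$, which is simply connected. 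Chaining the isomorphisms above yields $\pi_1(\partial\B(i,d))=0$, as desired.

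There is no real obstacle here: everything reduces to the two inputs (Lemma~\ref{skeleton} and the $2$-skeleton principle), and the numerical hypothesis on $i$ was calibrated precisely so that $j\ge 2$. The only place where one must be a little careful is to note that the containment of $2$-skeleta goes the ``right'' way, i.e.\ that $(\partial\B(i,d))^{(2)}$ is not just contained in but equal to $(C^*_d)^{(2)}$, which is exactly what Lemma~\ref{skeleton} delivers.
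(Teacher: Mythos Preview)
Your argument is correct and is essentially the paper's own proof spelled out in more detail: both observe that $2\le i\le d-4$ forces $\min\{i,d-i-2\}\ge 2$, invoke Lemma~\ref{skeleton} to get the full $2$-skeleton of $C^*_d$ inside $\partial\B(i,d)$, and then conclude simple connectivity from that of $C^*_d$. The only extra step you make explicit is the ``$\pi_1$ depends only on the $2$-skeleton'' principle, which the paper leaves implicit (and note that $C^*_d$ being simply connected only needs $d\ge 3$, not $d\ge 6$).
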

\begin{proof}
For $i$ in the given interval, $\min\{i,d-i-2\}\geq 2$. Hence by Lemma \ref{skeleton}, $\partial\B(i,d)$ contains the 2-skeleton of $C^*_d$, and so $\partial\B(i,d)$ is simply connected as $C^*_d$ is.
\end{proof}

We now compute homology groups of $\partial\B(i,d)$.
\begin{lemma} \label{homology} For all $1 \leq i \leq d-2$, $H_*(\partial\B(i,d);\Z) \cong H_*(\S^{i} \times \S^{d-i-2};\Z).$
\end{lemma}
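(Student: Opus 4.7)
My plan is to compute $H_*(\partial\B(i,d);\Z)$ via the reduced Mayer--Vietoris sequence associated to the decomposition $C^*_d = \B(i,d) \cup \C(i,d)$ of the boundary complex of the cross polytope into $\B(i,d)$ and its complement $\C(i,d)$ (introduced in Theorem \ref{main-B}(c)). The ingredients come directly from previously established results: $\B(i,d) \cap \C(i,d) = \partial\B(i,d)$ (an easy check, since a face of $C^*_d$ lies in both $\B$ and $\C$ precisely when it belongs to a facet of each, i.e., precisely when it lies in the combinatorial boundary of $\B$); $\B(i,d) \cup \C(i,d) = C^*_d$ is a combinatorial $(d-1)$-sphere; $\widetilde H_*(\B(i,d);\Z) \cong \widetilde H_*(\S^i;\Z)$ by Theorem \ref{main-B}(d); and $\widetilde H_*(\C(i,d);\Z) \cong \widetilde H_*(\S^{d-i-2};\Z)$ by combining Theorem \ref{main-B}(c) with Theorem \ref{main-B}(d).

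With these in hand, the Mayer--Vietoris sequence
$$\cdots \to \widetilde H_{j+1}(\S^{d-1}) \to \widetilde H_j(\partial\B(i,d)) \to \widetilde H_j(\B(i,d)) \oplus \widetilde H_j(\C(i,d)) \to \widetilde H_j(\S^{d-1}) \to \cdots$$
does almost all the work. For $j \notin \{d-2,d-1\}$ both sphere terms vanish, so $\widetilde H_j(\partial\B(i,d)) \cong \widetilde H_j(\B(i,d)) \oplus \widetilde H_j(\C(i,d))$, giving a $\Z$ in degree $i$ and a $\Z$ in degree $d-i-2$ (coalescing to $\Z^2$ when $i = d-i-2$). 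Because $i, d-i-2 < d-1$, we have $\widetilde H_{d-1}(\B) = \widetilde H_{d-1}(\C) = 0$, which forces $\widetilde H_{d-1}(\partial\B) = 0$ and produces a short exact sequence
$$0 \to \Z \to \widetilde H_{d-2}(\partial\B(i,d)) \to \widetilde H_{d-2}(\B(i,d)) \oplus \widetilde H_{d-2}(\C(i,d)) \to 0,$$
in which the left-hand $\Z$ is the image under the connecting map of the top class of $\S^{d-1}$. This sequence splits (the right-hand term is free), yielding $\widetilde H_{d-2}(\partial\B) = \Z$ generically, and $\Z^2$ in the extreme case $i = d-2$ where $\widetilde H_{d-2}(\B)$ also contributes a $\Z$.

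Finally I match the output against $\widetilde H_*(\S^i\times\S^{d-i-2};\Z)$ provided by Künneth, splitting into three subcases: the generic case $1 \leq i \leq d-3$ with $i \neq d-i-2$ (a $\Z$ in each of degrees $i$, $d-i-2$, $d-2$); the symmetric case $i = d-i-2$ ($\Z^2$ in the middle degree and $\Z$ at the top); and the boundary case $i = d-2$, where $\C(i,d) = \B(0,d)$ is disconnected and supplies the required $\Z$ in $\widetilde H_0(\partial\B)$ while the splitting promotes $\widetilde H_{d-2}(\partial\B)$ to $\Z^2$. In every case the Mayer--Vietoris answer agrees with the Künneth answer, proving the lemma. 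The main obstacle is not conceptual but purely bookkeeping: tracking the coincidences $i = d-i-2$ and $i = d-2$, and verifying that the connecting map is indeed injective exactly where needed (which follows because $\widetilde H_{d-1}(\B) \oplus \widetilde H_{d-1}(\C) = 0$). No step requires more than routine manipulation of the long exact sequence.
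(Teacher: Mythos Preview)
Your argument is correct and takes a genuinely different route from the paper. The paper sets $M=\B(i,d)$, invokes Poincar\'e--Lefschetz duality $H^k(M)\cong H_{d-1-k}(M,\partial M)$ for the orientable manifold-with-boundary $M$, and then reads off $H_*(\partial M)$ from the long exact sequence of the pair $(M,\partial M)$, supplementing with Lemma~\ref{skeleton} to kill the low-degree groups. You instead exploit the decomposition $C^*_d=\B(i,d)\cup\C(i,d)$ with intersection $\partial\B(i,d)$ and apply Mayer--Vietoris directly, feeding in the homology of $\B(i,d)$ and of $\C(i,d)\cong\B(d-i-2,d)$ from Theorem~\ref{main-B}(c,d). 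Your approach is more symmetric in $i\leftrightarrow d-i-2$ and avoids duality altogether; the paper's approach uses only one of the two pieces but needs orientability and the relative machinery. One small point worth tightening: your parenthetical justification that $\B\cap\C=\partial\B$ deserves a line of argument (e.g., if $\sigma$ lies in facets $\tau_1\in\B$ and $\tau_2\in\C$, then a ridge path in the sphere $\lk_{C^*_d}\sigma$ from $\tau_1$ to $\tau_2$ must cross a $(d-2)$-face containing $\sigma$ with one adjacent facet in $\B$ and one in $\C$, hence a boundary face). With that made explicit, the proof is complete.
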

\begin{proof}
By Poincar\'{e}-Lefschetz duality \cite[Theorem 3.43]{Hatcher}, $H^k(M;\Z) \cong H_{n-k}(M,\partial M;\Z)$ for any compact, orientable $n$-manifold $M$. Henceforth, we will set $M = \B(i,d)$ and assume that homology and cohomology groups are computed with coefficients in $\Z$. Moreover, since $\partial(\B(i,d))=\partial\C(i,d)$ and since $\C(i,d)$ is simplicially isomorphic to $\B(d-i-2,d)$, we assume without loss of generality that $i\leq d-i-2$.

Recall that by Theorem \ref{main-B}(d), $H_*(M)\cong H_*(\S^i)$. Since $M$ is a full-dimensional submanifold of a sphere
(namely, of $C^*_d$), it is orientable, and hence $\partial M$ is an orientable $(d-2)$-manifold without boundary. Thus $H_0(\partial M) \cong H_{d-2}(\partial M) \cong \Z$. Also since by Lemma \ref{skeleton}, $\partial M$ contains the $i$-skeleton of $C^*_d$, it follows that $H_j(\partial M)=0$ for all $0<j<i$ and $d-i-2<j<d-2$ (where the latter is by Poincar\'e duality).  In order to study all other homology groups of $\partial M$, we must examine two cases.

\smallskip\noindent\textbf{Case 1:} $i < d-i-2$.

By the Poincar\'{e}-Lefschetz duality, $H_{d-i-1}(M,\partial M) \cong H^i(M) \cong \Z$. The long exact homology sequence for the pair $(M,\partial M)$ yields
\begin{displaymath}
0 = H_{d-i-1}(M) \rightarrow H_{d-i-1}(M,\partial M) \rightarrow H_{d-i-2}(\partial M) \rightarrow H_{d-i-2}(M)=0,
\end{displaymath}
and hence $H_{d-i-2}(\partial M) \cong H_{d-i-1}(M,\partial M)\cong \mathbb{Z}$. Similarly, since $H^{d-i-1}(M) = H^{d-i-2}(M) = 0$, it follows that $H_{i}(M,\partial M) = H_{i+1}(M,\partial M) = 0$, and an analysis of (an appropriate segment of) the same long exact homology sequence shows that $H_i(\partial M) \cong H_i(M) \cong \Z$.  Also for all $i<j < d-i-2$ we have $H_{j+1}(M,\partial M) = 0$ (since $d-j-2 \neq i$), $H_{j}(M,\partial M) = 0$ (since $d-j-1 \neq i$); and, by the following exact sequence,
\begin{displaymath}
\ldots \rightarrow H_{j+1}(M,\partial M) \rightarrow H_j(\partial M) \rightarrow H_j(M) \rightarrow H_j(M,\partial M) \rightarrow \ldots, \end{displaymath}
$H_j(\partial M) \cong H_j(M)= 0$ (since $j \neq i$).

\medskip\noindent\textbf{Case 2:} $i = d-i-2$.

By Poincar\'{e}-Lefschetz duality, since $i+1 = d-i-1$, $H_{i+1}(M,\partial M) \cong \Z$ and $H_i(M,\partial M) = 0$. We examine the long exact homology sequence for the pair $(M,\partial M)$
\begin{displaymath}
\ldots \rightarrow 0 \rightarrow H_{i+1}(M,\partial M) \rightarrow H_i(\partial M) \rightarrow H_i(M) \rightarrow 0 \rightarrow \ldots
\end{displaymath}
Since $H_i(M) \cong \Z$ is a free $\Z$-module, this short exact sequence is split exact, and hence $H_i(\partial M) \cong \Z \oplus \Z$. This completes the treatment of all possible cases and establishes the claim. \end{proof}

Using the above results, the proof of Theorem \ref{main-B}(e) is almost immediate:

\smallskip\noindent{\it Proof of Theorem \ref{main-B}(e):\,}
As in the proof of Lemma \ref{homology}, we can assume without loss of generality that $i\leq \frac{d}{2}-1$. There are several cases to consider.

For $i=0$, $\B(0,d)$ is a disjoint union of two $(d-1)$-dimensional simplices, hence its boundary is a disjoint union of two $(d-2)$-spheres, and so $\partial(\B(0,d))$ triangulates $\S^0\times \S^{d-2}$.

For $i>1$, $\partial\B(i,d)$ is simply connected by Corollary \ref{simply-connectedness} and has the same homology as $\S^i\times \S^{d-i-2}$ by Lemma \ref{homology}. Theorem \ref{Kreck} then guarantees that $\partial\B(i,d)$ triangulates $\S^i\times \S^{d-i-2}$.

Finally, for $i=1$, consider the complex $\Delta$ on $3d$ vertices $\{x_1,\ldots, x_d, y_1,\ldots, y_d, t_1,\ldots, t_d\}$ generated by the facets
\begin{eqnarray*}
\{x_1,x_2, \ldots, x_d\}, \{y_1, x_2,\ldots, x_d\}, \{y_1,y_2,x_3,\ldots, x_d\}, \cdots, \{y_1,y_2, \ldots, y_d\}, \\
\{t_1, y_2,\ldots, y_d\}, \{t_1, t_2, y_3,\ldots, y_d\}, \cdots, \{t_1,t_2,\ldots, t_d\}.
\end{eqnarray*}
This complex is a shellable $(d-1)$-ball (the above order of facets is a shelling), and $\B(1,d)$ is obtained from $\Delta$ by identifying the facets $\{x_1,x_2, \ldots, x_d\}$ and $\{t_1,t_2,\ldots, t_d\}$ of this ball via the map $x_i\mapsto t_i$, $i=1,\ldots, d$. As $\B(1,d)$ is orientable, it follows that $\B(1,d)$ triangulates $\S^1\times \mathbb{B}^{d-2}$, and hence $\partial\B(1,d)$ triangulates $\S^1\times\S^{d-2}$.
\endproof

We close this section by deriving Theorem \ref{main-S} from Theorem \ref{main-B}. By Theorem \ref{main-B}(b,e), $\partial\B(i,d)$ is a cs $2d$-vertex triangulation of $\S^i\times \S^{d-i-2}$. Moreover, if $i$ is odd, then by Theorem~\ref{main-B}(b), $\B(i,d)$ admits a vertex-transitive action of the dihedral group of order $4d$.  This action induces a vertex-transitive action on $\partial\B(i,d)$. Similarly, if $d-i$ is odd, then Theorem~\ref{main-B}(b,c) implies that $\C(i,d)$ admits a vertex-transitive action of $\D_{2d}$, which in turn induces a vertex-transitive action on $\partial\C(i,d)=\partial\B(i,d)$. Otherwise, $i$ is even, and similar reasoning using Theorem \ref{main-B}(b) applies. \endproof

\section{Remarks on face numbers and Euler characteristic}
Our treatment of $\B(i,d)$ and $\partial\B(i,d)$ would be incomplete if we did not compute enumerative characteristics such as their $h$-numbers. This is done in this section. We also discuss connections to another conjecture of Sparla that concerns possible values of the Euler characteristic of cs triangulations.

One of the basic invariants of simplicial complexes are their face numbers: the $f$-vector of a $(d-1)$-dimensional simplicial
complex $\Delta$ is $f(\Delta)=(f_{-1}, f_0, \ldots, f_{d-1})$,
where $f_j$ counts the number of $j$-dimensional faces of $\Delta$. It is sometimes more convenient to work with the $h$-vector, $h(\Delta)=(h_0,h_1,\ldots, h_d)$ (or the $h$-polynomial, $h(\Delta, x):= \sum_{j=0}^{d} h_{j}x^{d-j}$) instead of the $f$-vector ($f$-polynomial, $f(\Delta, x):= \sum_{j=0}^{d} f_{j-1}x^{d-j}$, resp.). It carries the same information as the $f$-vector and is defined by the following relation:
$$
h(\Delta, x)= f(\Delta, x-1).
$$
In particular, $h_0=1$, $h_1=f_0-d$, and $h_d=(-1)^{d-1}\widetilde{\chi}(\Delta)$, where $\widetilde{\chi}(\Delta)$ denotes the reduced Euler characteristic of $\Delta$.

Following Stanley \cite{St79}, we call a $(d-1)$-dimensional simplicial complex {\em balanced} if the vertex set $V$ of $\Delta$ can be partitioned in $d$ (nonempty) sets: $V=V^1\sqcup V^2\sqcup \cdots \sqcup V^d$ (called {\em color sets}) in such a way that no two vertices from the same color set are connected by an edge. For instance, the complex $C^*_d$ (as well as all its full-dimensional subcomplexes) is balanced:  the color sets are given by $V^j=\{x_j,y_j\}$ for $1\le j \leq d$.

For a balanced complex $\Delta$, one can define the {\em flag $f$-vector} and {\em flag $h$-vector} of $\Delta$, $(f_S)_{S\subseteq [d]}$ and $(h_S)_{S\subseteq [d]}$, whose entries refine the usual $f$-and $h$-numbers, see \cite{St79}. The only properties of these numbers we will use here are that $$f_S(\Delta)=f_{|S|-1}(\Delta_S), \mbox{ where } \Delta_S:=\{\sigma\in \Delta \ : \ \sigma\subseteq \cup_{j\in S}V^j \},$$ as well as \begin{equation} \label{flag-h}
h_S(\Delta)=(-1)^{|S|-1}(\widetilde{\chi}(\Delta_S)) \quad \mbox{ and } \quad h_j(\Delta)=\sum_{S\subseteq [d], |S|=j} h_S(\Delta).
\end{equation}

As our first result we compute the $h$-vectors of complexes $\B(i,d)$.

\begin{proposition}  \label{h-numbers-B}
For all $0 \leq i \leq d-1$ and all $0 \leq j \leq d$, \begin{equation} \label{h-nums}
h_j(\B(i,d)) = \begin{cases}
\binom{d}{j} & \text{ if } j \leq i+1, \\
(-1)^{j-i-1}\binom{d}{j} & \text{ otherwise.}
\end{cases}
\end{equation}
\end{proposition}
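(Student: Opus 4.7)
The plan is to exploit the fact that $\B(i,d)$ is a balanced simplicial complex with color classes $V^k = \{x_k,y_k\}$ for $k\in[d]$, and then apply the flag machinery of (\ref{flag-h}): $h_j(\B(i,d)) = \sum_{|S|=j} h_S(\B(i,d))$, where $h_S(\B(i,d)) = (-1)^{|S|-1}\,\widetilde{\chi}(\B(i,d)_S)$. This reduces the whole computation to evaluating the reduced Euler characteristic of each color-restricted subcomplex $\B(i,d)_S$.

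The first key step is to observe that, up to relabeling, $\B(i,d)_S$ depends only on $|S|$ and $i$. Explicitly, for $S = \{s_1 < s_2 < \cdots < s_j\} \subseteq [d]$, the map $x_{s_k} \mapsto x_k$, $y_{s_k} \mapsto y_k$ (for $1\leq k \leq j$) should induce a simplicial isomorphism $\B(i,d)_S \cong \B(i,j)$. To verify this I would unpack the definition of $\Fill_d$ and show that for any face $\sigma = \{z_{j_1}, \ldots, z_{j_s}\}$ of $C^*_d$ (with $j_1 < \cdots < j_s$), the switch set $\Sw_d(\Fill_d(\sigma))$ is precisely $\{j_k : 1 \leq k \leq s-1,\ z_{j_k}\text{ and }z_{j_{k+1}}\text{ have opposite letters}\}$. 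In particular $|\Sw_d(\Fill_d(\sigma))|$ counts only the letter changes along the ordered letter sequence associated to $\sigma$, a quantity invariant under the relabeling. Combined with Lemma \ref{small-faces}, this gives the desired isomorphism.

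With this reduction in place, it remains to evaluate $\widetilde{\chi}(\B(i,j))$ in two regimes. If $j \leq i+1$, then every facet of $C^*_j$ has switch set of size at most $j-1 \leq i$, so $\B(i,j) = C^*_j$ is the boundary of the $j$-dimensional cross polytope, a triangulation of $\S^{j-1}$, and $\widetilde{\chi}(\B(i,j)) = (-1)^{j-1}$. If $j \geq i+2$, Theorem \ref{main-B}(d) applies with parameters $i$ and $j$ (since $0 \leq i < j-1$), yielding that $\B(i,j)$ has the integral homology of $\S^i$, and hence $\widetilde{\chi}(\B(i,j)) = (-1)^i$.

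Substituting into the flag formula completes the calculation. For $|S|=j \leq i+1$, $h_S = (-1)^{j-1}(-1)^{j-1} = 1$, so $h_j = \binom{d}{j}$. For $|S|=j \geq i+2$, $h_S = (-1)^{j-1}(-1)^i = (-1)^{j-i-1}$, so $h_j = (-1)^{j-i-1}\binom{d}{j}$. The only step requiring any real work is the letter-sequence description of the switch set and the resulting identification $\B(i,d)_S \cong \B(i,j)$; once that is in hand, Theorem \ref{main-B}(d) carries all of the topological content and the rest is bookkeeping.
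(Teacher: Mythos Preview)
Your proof is correct and follows essentially the same route as the paper: both arguments use the flag formula $h_j=\sum_{|S|=j}(-1)^{|S|-1}\widetilde{\chi}(\B(i,d)_S)$, identify $\B(i,d)_S$ with $\B(i,|S|)$, and then read off the Euler characteristic from Theorem~\ref{main-B}(d). The only minor difference is that the paper obtains the isomorphism $\B(i,d)_S\cong\B(i,|S|)$ by combining the observation $\B(i,d)_{[d-1]}=\B(i,d-1)$ with the vertex-transitive group action of Theorem~\ref{main-B}(b), whereas you verify it directly via the switch-set description and Lemma~\ref{small-faces}.
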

\begin{proof}
It follows from our definition of $\B(i,d)$ that $\B(i,d)_{[d-1]}=\B(i, d-1)$.  Since $\B(i,d)$ admits a vertex-transitive action of a group (see Theorem \ref{main-B}(b)) we inductively obtain that for $S\subseteq[d]$, $\B(i,d)_{S}$ is simplicially isomorphic to $\B(i, |S|)$, where for $i\geq s$, we set $\B(i,s)=C^*_s$. By Theorem \ref{main-B}(d), we then have that
$$\widetilde{\chi}(\B(i,d)_S)= \left\{\begin{array}{ll} (-1)^{|S|-1} & \mbox{ if $|S|\leq i+1$,}\\
(-1)^i & \mbox{ otherwise,} \\
\end{array}
\right.
$$
Summing these expressions over all $S\subseteq[d]$ of size $j$ and using Eq.~(\ref{flag-h}) implies the result.
\end{proof}

From the $h$-numbers of $\B(i,d)$, we can easily compute the $h$-numbers of $\partial\B(i,d)$. To do this, we use \cite[Theorem 3.1]{Novik-Swartz} asserting that if $\Delta$ is a $(d-1)$-dimensional manifold with boundary, then for all $0 \leq j \leq d,$ \begin{equation} \label{NS3.1}
h_{d-j}(\Delta) - h_j(\Delta) = (-1)^{d-j-1}\binom{d}{j}\widetilde{\chi}(\Delta)-g_j(\partial\Delta),
\end{equation}
where $g_j(\partial\Delta) := h_j(\partial\Delta) - h_{j-1}(\partial\Delta)$, and $h_{-1}:=0$ (and so, $h_j(\partial\Delta)=\sum_{k=0}^j g_k(\partial\Delta)$).

\begin{proposition}  \label{g-numbers-boundary}
Suppose $i \leq \lfloor \frac{d-2}{2} \rfloor$. Then $$
g_k(\partial\B(i,d)) =
\left\{\begin{array}{lll}
\binom{d}{k} & \mbox{ if $k\leq i+1$,}\\
(-1)^{k-i-1}\binom{d}{k} & \mbox{ if $i+1\leq k\leq d-i-1$,}\\
-\left((-1)^{k-i}+(-1)^{d-k-i}+1\right)\binom{d}{k}  & \mbox{ if $k\geq d-i-1$.}
\end{array}
\right.
$$
\end{proposition}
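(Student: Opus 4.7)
The plan is to derive the $g$-numbers of $\partial\B(i,d)$ from the $h$-numbers of $\B(i,d)$ (Proposition~\ref{h-numbers-B}) by means of the identity~(\ref{NS3.1}). Since $\B(i,d)$ has the integral homology of $\S^i$ by Theorem~\ref{main-B}(d), its reduced Euler characteristic is $(-1)^i$. Solving (\ref{NS3.1}) for $g_k(\partial\B(i,d))$ gives
\begin{equation*}
g_k(\partial\B(i,d)) \;=\; h_k(\B(i,d)) \;-\; h_{d-k}(\B(i,d)) \;+\; (-1)^{d-k-1+i}\binom{d}{k}.
\end{equation*}
The hypothesis $i\leq \lfloor (d-2)/2\rfloor$ is exactly $d-i-1\geq i+1$, so the indices $i+1$ and $d-i-1$ carve $\{0,1,\ldots,d\}$ into the three ranges appearing in the claim.

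Next I would substitute the piecewise formula of Proposition~\ref{h-numbers-B} for both $h_k$ and $h_{d-k}$ and simplify in each range. The only sign identity needed throughout is $(-1)^{d-k-1+i}=(-1)^{d-k-i-1}$, valid because the exponents differ by $2i$. For $k\leq i+1$ (and generically $d-k\geq i+2$) one has $h_k=\binom{d}{k}$ and $h_{d-k}=(-1)^{d-k-i-1}\binom{d}{k}$; the last two terms on the right cancel and $g_k=\binom{d}{k}$. For $i+2\leq k\leq d-i-2$ both $h_k$ and $h_{d-k}$ take the signed value $(-1)^{\bullet}\binom{d}{\bullet}$, the $h_{d-k}$ term cancels the Euler-characteristic correction, and one is left with $(-1)^{k-i-1}\binom{d}{k}$. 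For $k\geq d-i$ the roles reverse: $h_{d-k}=\binom{d}{k}$ while $h_k=(-1)^{k-i-1}\binom{d}{k}$, so the three terms collect into $\bigl((-1)^{k-i-1}-1+(-1)^{d-k-i-1}\bigr)\binom{d}{k}$, which rewrites as $-\bigl((-1)^{k-i}+(-1)^{d-k-i}+1\bigr)\binom{d}{k}$.

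The boundary indices $k=i+1$ and $k=d-i-1$ are shared between adjacent cases, so one has to verify that the claimed formulas agree on these overlaps. This is immediate from the fact that the two branches in Proposition~\ref{h-numbers-B} already agree at $j=i+1$ (both give $\binom{d}{i+1}$) and, by symmetry, the corresponding check applies at $k=d-i-1$; the degenerate symmetric situation $d=2i+2$ (where $i+1=d-i-1$) collapses to a single point and is handled by the same verification. The whole argument is elementary sign-bookkeeping: no new topology or combinatorics enters beyond what has already been established. The main obstacle, if any, is simply the care needed to match the parities of the three exponents $k-i-1$, $d-k-i-1$, and $d-k-1+i$ that appear from the three different sources in the displayed formula.
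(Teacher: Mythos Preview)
Your proof is correct and follows exactly the approach the paper takes: substitute the $h$-numbers from Proposition~\ref{h-numbers-B} into identity~(\ref{NS3.1}) using $\widetilde{\chi}(\B(i,d))=(-1)^i$, and then sort out the resulting sign combinations case by case. The paper's own proof is a single sentence to this effect; you have simply written out the bookkeeping it leaves to the reader.
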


\begin{proof} Substitute eq.~\eqref{h-nums} in \eqref{NS3.1} and use the fact that $\widetilde{\chi}(\B(i,d))=(-1)^i$.
\end{proof}

In addition to the $h$-numbers of simplicial complexes, one can consider the $h'$-numbers: if $\Delta$ is a $(d-1)$-dimensional simplicial complex, then for $0\leq j \leq d$,
$$
h'_j(\Delta)=h_j(\Delta)+\binom{d}{j}\sum_{k=1}^{j-1}(-1)^{j-k-1}\beta_{k-1}(\Delta), \mbox{ where $\beta_{k-1}(\Delta)=\dim_{\R}\widetilde{H}_{k-1}(\Delta; \R)$}.
$$
Thus $h'_d(\Delta)=\beta_{d-1}(\Delta)$. Furthermore, when $\Delta$ is balanced, the flag $h'$-numbers of $\Delta$ are defined and satisfy $$h'_S(\Delta)=\beta_{|S|-1}(\Delta_S)\quad \mbox{ for } S\subseteq[d].$$ These numbers refine the $h'$-numbers: $h'_j(\Delta)=\sum_{|S|=j} h'_S(\Delta)$. A proof analogous to that of Proposition \ref{h-numbers-B} yields the following.

\begin{proposition} For all $S\subseteq [d]$,
$$h'_S(\B(i,d))=\left\{\begin{array}{ll}
1 & \mbox{ if $|S|\leq i+1$,}\\
0,& \mbox{ otherwise}
\end{array} \right. $$
Hence $h'_j(\Delta)=\binom{d}{j}$ if $j\leq i+1$ and $h'_j(\Delta)=0$ if $j>i+1$.
\end{proposition}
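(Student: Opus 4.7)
The plan is to mimic the proof of Proposition \ref{h-numbers-B} almost verbatim, replacing the reduced Euler characteristic computation by a computation of a single Betti number. The key identity we will invoke is $h'_S(\Delta)=\beta_{|S|-1}(\Delta_S)$, valid for a balanced complex, so everything reduces to understanding the topology of the induced subcomplex $\B(i,d)_S$.

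First I would observe that, exactly as in the proof of Proposition \ref{h-numbers-B}, the vertex-transitive group action on $\B(i,d)$ furnished by Theorem \ref{main-B}(b) together with the immediate identity $\B(i,d)_{[d-1]}=\B(i,d-1)$ implies inductively that for every $S\subseteq[d]$,
\[
\B(i,d)_S \ \cong\ \B(i,|S|),
\]
with the convention (justified by the fact that every facet of $C^*_s$ has switch set contained in $[s-1]$, hence of size at most $s-1\leq i$) that $\B(i,s)=C^*_s$ whenever $s\leq i+1$.

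Next I would read off the single relevant Betti number of $\B(i,s)$ in two cases. If $|S|\leq i+1$, then $\B(i,d)_S\cong C^*_{|S|}$ is the boundary of a cross polytope, which triangulates $\S^{|S|-1}$, so $\beta_{|S|-1}(\B(i,d)_S)=1$. If $|S|>i+1$, then by Theorem \ref{main-B}(d) the complex $\B(i,|S|)$ has the integral homology of $\S^i$; since $|S|-1>i$, the Betti number $\beta_{|S|-1}(\B(i,d)_S)$ vanishes. Applying $h'_S(\B(i,d))=\beta_{|S|-1}(\B(i,d)_S)$ gives the claimed values of $h'_S$.

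Finally, the formula for $h'_j$ follows by summing $h'_j(\B(i,d))=\sum_{|S|=j}h'_S(\B(i,d))$: for $j\leq i+1$ every term equals $1$ and there are $\binom{d}{j}$ of them, while for $j>i+1$ every term is zero. There is essentially no obstacle here; the only point that deserves care is verifying that $\B(i,s)=C^*_s$ throughout the boundary range $s\leq i+1$, but this is immediate from the definition of the switch set and matches the remark following the proof of Theorem \ref{main-B}(d).
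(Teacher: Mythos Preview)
Your proposal is correct and is exactly the argument the paper intends: the paper simply says ``a proof analogous to that of Proposition~\ref{h-numbers-B} yields the following,'' and your write-up fills in precisely that analogous computation, replacing the reduced Euler characteristic by the top Betti number via $h'_S(\Delta)=\beta_{|S|-1}(\Delta_S)$.
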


\begin{remark}
The $h$-numbers of triangulated spheres and balls as well as the $h'$-numbers of manifolds (with and without boundary) are equal to dimensions of homogeneous components of Artinian reductions of their Stanley-Reisner rings; however this connection is beyond the scope of this paper. Using these techniques, one can show that among all $(d-1)$-dimensional triangulated manifolds with non-vanishing $\beta_i$, the complex $\B(i,d)$ has the (componentwise) minimal flag $h'$-vector. Trying to construct such a balanced complex was the starting point of this project. \end{remark}

We close the paper with a discussion of the following conjecture of Sparla on the Euler characteristic of cs triangulations of  manifolds.

\begin{conjecture} \label{sparla-conj} {\rm{(\cite[Conjecture 4.12]{Sparla-thesis}, \cite{Sparla2})}}
Let $M$ be a centrally symmetric combinatorial $2r$-dimensional manifold with $2k$ vertices. Then
\begin{equation}\label{sparla-formula}
(-1)^r\binom{2r+1}{r+1}(\chi(M)-2) \leq 4^{r+1}\binom{\frac{1}{2}(k-1)}{r+1}.
\end{equation}
Moreover, equality is attained if and only if $M$ contains the $r$-skeleton of the $k$-dimensional cross polytope. \end{conjecture}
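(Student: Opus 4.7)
The plan is to combine the flag $h$-vector framework introduced earlier in this section with the Dehn--Sommerville relations for closed manifolds. Since $M$ admits a fixed-point-free involution, its vertex set partitions into $k$ antipodal pairs; assuming that no antipodal pair spans an edge of $M$ (a condition implicit in the equality case, since the $r$-skeleton of $C^*_k$ carries no antipodal edges), the complex $M$ embeds as a balanced subcomplex of $C^*_k$. Every $S\subseteq[k]$ then yields an induced subcomplex $M_S\subseteq C^*_{|S|}$ that is itself centrally symmetric, and the identity $h_S(M)=(-1)^{|S|-1}\widetilde\chi(M_S)$ from \eqref{flag-h} converts flag $h$-numbers into Euler characteristics of induced subcomplexes.

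I would then invoke the Dehn--Sommerville relations $h_{2r+1-j}(M)-h_j(M)=(-1)^j\binom{2r+1}{j}\widetilde\chi(M)$ for the closed $(2r)$-manifold $M$ (which follow from \eqref{NS3.1} with empty boundary), and combine them with the expansion $h_j(M)=\sum_{|S|=j}h_S(M)$. Taking $j=r+1$ and pairing each $S$ with $[k]\setminus S$, together with the cross-polytope Alexander duality between $M_S$ and its complement in $C^*_{|S|}\cong\S^{|S|-1}$ (in the spirit of Theorem~\ref{main-B}(c)), should rewrite $(-1)^r\binom{2r+1}{r+1}(\chi(M)-2)$ as a weighted sum of $\widetilde\chi(M_S)$ for $|S|\leq r+1$.

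The central new ingredient I would need is a sharp uniform bound on $\widetilde\chi(M_S)$ valid for every induced subcomplex $M_S$ of a cs $(2r)$-manifold, attained exactly when $M_S$ is the full boundary of the $|S|$-dimensional cross polytope. The natural attack is induction on $|S|$ via the antipodal Mayer--Vietoris decomposition $M_S=\Star x_{|S|}\cup\Star y_{|S|}$, mirroring the computation of $\widetilde H_*(\B(i,d))$ in the proof of Theorem~\ref{main-B}(d). Substituting these bounds into the weighted sum and simplifying the identity $4^{r+1}\binom{(k-1)/2}{r+1}=\frac{2^{r+1}}{(r+1)!}\prod_{j=0}^{r}(k-2j-1)$ should produce precisely the right-hand side of the conjectured inequality.

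The hard part is the sharp bound on $\widetilde\chi(M_S)$: one can trivially bound Euler characteristics of arbitrary cs subcomplexes of $C^*_{|S|}$, but here the global $(2r)$-manifold hypothesis on the ambient $M$ must be leveraged to forbid large cancellations in the homology of the induced subcomplex. I would expect that Alexander duality inside $C^*_{|S|}\cong\S^{|S|-1}$, combined with the local Poincar\'e--Lefschetz duality inherited from the links of vertices of $M$, is the natural mechanism. Once this bound is established, the equality clause follows: equality in the conjectured inequality forces saturation in every $M_S$ with $|S|\leq r+1$, so each such induced subcomplex is the full boundary of its ambient cross polytope, which by definition means $M$ contains the entire $r$-skeleton of $C^*_k$. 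Conversely, $\partial\B(r,2r+2)$ from Theorem~\ref{main-B} already realizes equality when $k=2r+2$, giving a concrete check that the bound is tight.
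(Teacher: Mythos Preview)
The paper does not prove this statement: it is stated as a \emph{conjecture}, and the paper explicitly notes that the inequality part remains open when $2k<6r+4$. More importantly, the paper \emph{disproves} the equality clause you are trying to establish. Immediately after stating Conjecture~\ref{sparla-conj}, the paper exhibits $M=\partial\B(i,2r+2)$ with $i<r$ and $i\equiv r\pmod 2$: here $k=2r+2$, $\chi(M)-2=2(-1)^i$, and a direct computation shows equality holds in \eqref{sparla-formula}; yet by Lemma~\ref{skeleton} this $M$ only contains the $i$-skeleton of $C^*_{2r+2}$, not the $r$-skeleton (since $\widetilde H_i(M;\Z)\neq 0$). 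So the ``only if'' direction you sketch is simply false, and any purported proof of it must contain an error.

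In your outline the error lies in two places. First, you assume that a cs combinatorial manifold with $2k$ vertices embeds as a balanced subcomplex of $C^*_k$; nothing in the hypotheses prevents antipodal vertices from spanning an edge, so this balancing is not generally available, and your flag $h$-vector machinery does not apply. Second, and decisively, the step ``equality forces saturation in every $M_S$ with $|S|\le r+1$'' cannot be correct: the counterexamples $\partial\B(i,2r+2)$ above \emph{are} balanced subcomplexes of $C^*_{2r+2}$, achieve equality, and yet have $M_S\cong\B(i,|S|)\subsetneq C^*_{|S|}$ for $i+1<|S|\le r+1$. Whatever weighted sum you would obtain from the Dehn--Sommerville relations must therefore allow cancellation among the $\widetilde\chi(M_S)$ rather than force each term to be extremal; the ``sharp uniform bound'' you hope to prove on $\widetilde\chi(M_S)$ is not sharp termwise in the way your equality argument requires. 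Your concluding check using $\partial\B(r,2r+2)$ is one instance of equality, but the other complexes $\partial\B(i,2r+2)$ with $i<r$ of the same parity show that equality does not characterize this single example.
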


Both assertions of this conjecture were proved in \cite{N05} under an additional restriction that $M$ has at least $6r+4$ vertices. While the first part of the conjecture remains open for $2k< 6r+4$, our construction of $\B(i,d)$ shows that the second assertion of this conjecture {\bf fails} if $2k=4r+4$ vertices. Indeed, let $M =\partial\B(i,2r+2)$. Then $M$ is a cs triangulation of $\S^i \times \S^{2r-i}$ with $2(2r+2)$ vertices, and $\chi(M) -2 = 2\cdot (-1)^i$.  When $i<r$ and $i$ has the same parity as $r$, equality holds in \eqref{sparla-formula}, but $M$ does not have  the complete $r$-skeleton of the $(2r+2)$-dimensional cross polytope since $\widetilde{H}_i(M;\mathbb{Z}) \neq 0$.

In the positive direction, it follows easily from results of \cite{N05} that Sparla's conjecture does hold for cs triangulations of manifolds all of whose Betti numbers but the middle one vanish.

\begin{proposition}
Let $\M$ be a cs triangulation of a $2r$-dimensional manifold with $2k$ vertices. If all Betti numbers of $M$ but the middle one vanish (that is, $\beta_j(M)\neq 0$ only if $j\in\{r,2r\}$), then $$\binom{2r+1}{r+1}\beta_r(M) =(-1)^r\binom{2r+1}{r+1}(\chi(M)-2) \leq 4^{r+1}\binom{\frac{1}{2}(k-1)}{r+1},
$$
and equality is attained if and only if $M$ contains the $r$-skeleton of the $k$-dimensional cross polytope. In particular, an arbitrary cs triangulation of $\S^r \times \S^r$ with $4r+4$ vertices contains the $r$-skeleton of the $(2r+2)$-dimensional cross polytope. \end{proposition}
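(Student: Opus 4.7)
The plan is to reduce the statement to inequalities already established in \cite{N05}. The first equality in the displayed line is immediate from Poincar\'e duality: a cs combinatorial $2r$-manifold is connected and orientable, so $\beta_0(M)=\beta_{2r}(M)=1$, and the Betti-vanishing hypothesis (interpreted as $\beta_j(M)=0$ for $1\le j\le 2r-1$, $j\ne r$) then gives
\[
\chi(M)-2 \;=\; \sum_{j=0}^{2r}(-1)^j\beta_j(M)-2 \;=\; (-1)^r\beta_r(M),
\]
which yields the first equality and rewrites the whole statement as a pure bound on $\beta_r(M)$.

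For the inequality, I would invoke the main bound of \cite{N05} for cs triangulated $2r$-manifolds with $2k$ vertices. That bound is a sharpened form of Sparla's inequality, reading as \eqref{sparla-formula} plus a correction term which is a non-negative combination of the intermediate Betti numbers $\beta_j(M)$ for $1\le j\le r-1$ (and, by Poincar\'e duality, for $r+1\le j\le 2r-1$). The hypothesis $2k\ge 6r+4$ imposed in \cite{N05} is used precisely to absorb that correction; once all intermediate $\beta_j(M)$ are assumed to vanish, the correction is identically zero and \eqref{sparla-formula} follows for every $k$. The equality analysis of \cite{N05} transfers in the same way: when equality holds, the extremal argument there forces every $(r+1)$-subset of the color classes of $C^*_k$ to span a face of $M$, i.e., $M$ contains the entire $r$-skeleton of $C^*_k$.

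For the ``in particular'' assertion, specialize to a cs triangulation $M$ of $\S^r\times\S^r$ with $k=2r+2$. By the K\"unneth formula, $\beta_r(M)=2$ and all other intermediate Betti numbers vanish, so the bound above applies, and the left-hand side of \eqref{sparla-formula} equals $2\binom{2r+1}{r+1}$. A direct manipulation shows $4^{r+1}\binom{(2r+1)/2}{r+1}=\binom{2r+2}{r+1}$, and Pascal's rule together with $\binom{2r+1}{r}=\binom{2r+1}{r+1}$ gives $\binom{2r+2}{r+1}=2\binom{2r+1}{r+1}$. Hence equality holds in \eqref{sparla-formula}, and the equality characterization forces $M$ to contain the $r$-skeleton of $C^*_{2r+2}$.

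The only delicate step is the bookkeeping needed to extract the precise ``Sparla $+$ Betti correction'' form of the inequality from the arguments in \cite{N05}, and to verify that under our Betti hypothesis the correction really vanishes rather than merely being controlled; once this is in hand, the remainder is Poincar\'e duality plus routine binomial identities.
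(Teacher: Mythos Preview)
Your proposal is correct and follows essentially the same approach as the paper: both reduce the inequality and its equality case directly to the results of \cite{N05} (the paper simply cites \cite[Eq.~(12)]{N05} and the last remark of Section~4 there), with the Betti-vanishing hypothesis eliminating the need for the vertex bound $2k\ge 6r+4$. Your additional work---the Poincar\'e-duality derivation of the first equality and the explicit binomial verification for the $\S^r\times\S^r$ case---fills in details the paper leaves to the reader, and your ``delicate bookkeeping'' caveat is unnecessary since the required inequality is stated explicitly in \cite{N05}.
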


\proof The inequality follows from \cite[Eq.~(12)]{N05}, and the treatment of
equality is the same as in \cite{N05} (see the last remark of Section 4 there). \endproof

As this paper shows, the complexes $\B(i,d)$ and $\partial\B(i,d)$ have many fascinating properties, and we hope that their further study will lead to even more new results.

\bigskip\noindent{\large \bf Acknowledgments} We are grateful to Wolfgang K\"uhnel and Felix Effenberger for helpful feedback on some preliminary results of this paper.  We also thank Joel Hass and Jack Lee for helpful conversations.

{\small
}
\end{document}